\newtheorem{prethm}{{\bf Theorem}}
\newenvironment{thm}{\begin{prethm}{\hspace{-0.5
               em}{\bf.}}}{\end{prethm}}
\newtheorem{prepro}{{\bf Theorem}}
\newtheorem{preprop}{{\bf Proposition}}
\newenvironment{prop}{\begin{preprop}{\hspace{-0.5
               em}{\bf.}}}{\end{preprop}}
\newtheorem{precor}{{\bf Corollary}}
\newenvironment{cor}{\begin{precor}{\hspace{-0.5
               em}{\bf.}}}{\end{precor}}
\newtheorem{preconj}{{\bf Conjecture}}
\newtheorem{preremark}{{\bf Remark}}
\newenvironment{remark}{\begin{preremark}\rm{\hspace{-0.5
               em}{\bf.}}}{\end{preremark}}
\newtheorem{preexample}{{\bf Example}}
\newenvironment{example}{\begin{preexample}\rm{\hspace{-0.5
               em}{\bf.}}}{\end{preexample}}
\newtheorem{prelem}{{\bf Lemma}}
\newtheorem{prelam}{{\bf Lemma}}
\newtheorem{preproof}{{\bf Proof.}}
\newenvironment{proof}[1]{\begin{preproof}{\rm
               #1}\hfill{$\Box$}}{\end{preproof}}
\begin{document}
\title{\bf \large   On the $j$-invariant and  the Legendre Representation of Elliptic Curves with Complex Multiplication}
\author{\bf{\large{Khashayar Filom}}\thanks{Department of Mathematical Sciences, Sharif University of Technology, Tehran, Iran\hspace{3cm} E-mail:$\mathsf{filom@mehr.sharif.ir}$  This work is part of author's
 Ph.D. thesis at Sharif University under co-advising of H. Fanai and M. Shahshahani.} 
}
\date{}
\maketitle
\begin{abstract}
{By introducing a class of meromorphic functions with certain ramification structures on $\Bbb{CP}^1$, a new method for the determination of the Legendre representation of elliptic curves with complex multiplication is introduced. These functions reduce the desired representation to the solution of an explicitly given system of polynomial equations and makes no use of the knowledge of Hilbert class polynomial on which standard computations depend. As a byproduct, an algorithm for computing $j(k\tau)$ in terms of $j(\tau)$ is obtained and implemented for $k=2,3$.  Solving the system of equations provides a method for computing certain special values of the  modular function $j:\Bbb{H}\rightarrow\Bbb{C}$.}
\end{abstract}
\noindent
\section{Introduction}
Consider an elliptic curve $\mathcal{E}$ in its Legendre form $y^2=x(x-1)(x-\lambda)$,
 $(\lambda\in\Bbb{C}-\{0,1\})$, 
and let $\tau\in\Bbb{H}$ (where $\Bbb{H}$  denotes the upper half plane) be such that $\mathcal{E}\cong\frac{\Bbb{C}}{\Bbb{Z}+\Bbb{Z}\tau}$. 
Relative to the Legendre representation,
$j$-invariant is the rational function 
$j(\lambda)=256\frac{(\lambda^2-\lambda+1)^3}{(\lambda^2-\lambda)^2}$.
It has also a representation as a rational function of Eisenstein series in $\tau$.  It is
a modular function for ${\rm{SL}}_2 (\Bbb{Z})$ on $\Bbb{H}$ 
whose $q$-expansion, $q={\rm{e}}^{2{\rm{\pi}} {\rm{i}}\tau}$, is
$$j(\tau)=\frac{1}{q}+744+196884q+21493760q^2+864299970q^3+20245856256q^4+\cdots$$
The explicit calculation of $j(\tau)$ involves the evaluation of an elliptic integral
which is in fact an algebraic integer when $\tau$ belongs to an imaginary quadratic field.

The purpose of this note is to show that in the special case where the elliptic curve $\mathcal{E}$
admits a complex multiplication, the corresponding $\lambda$ can in principle be computed explicitly by solving a system of polynomial equations.
The novel idea in this approach was motivated from the theory of dessins d'enfants and the realization of elliptic curves via ``proper dessins" as explained in \cite{cwd} where a morphism of hyperelliptic curves which respects the hyperelliptic involutions (in other words, maps a proper dessin on the first Riemann surface to a proper dessin on another) is decomposed into two components and then recovered by calculating the component on $\Bbb{CP}^1$ via exploiting the theory of Belyi functions on $\Bbb{CP}^1$.
However, the exposition here is independent of \cite{cwd} and makes no explicit use of dessins d'enfants (cf. Remark \ref{Belyi}).
The standard method for deriving the $j$-invariant of an elliptic curve with complex multiplication involves finding the Hilbert class polynomial  (which becomes more difficult as the class number of the corresponding quadratic order increases) and then finding its roots \cite{Cohen}. In our approach, no {\it a priori} knowledge of the Hilbert polynomial (class field) is necessary.

In section 2, we recall the relevant background material and explain the polynomial systems determining $\lambda$.  
Applications to specific examples is given in section 3.  
As another application of this point of view, the relationship between $j$-values of isogenous elliptic curves is investigated in section 4. In particular, $j(k\tau)$ is explicitly calculated in terms of $j(\tau)$ for 
$k=2,~{\rm or}~3$.
A table explicitly exhibiting some special elliptic curves, their $j$-invariants, 
their endomorphism rings (or orders in an imaginary quadratic field),
and periods $\tau$, is given in the final section.

\section{Main Strategy} 

To illustrate the main idea, fix a natural number $D$ which is not a perfect square. We are interested in elliptic curves $\mathcal{E}$ for which there exists a degree $D$ isogeny $f:\mathcal{E}\rightarrow\mathcal{E}$.
Considering $\mathcal{E}$ in the form of $\frac{\Bbb{C}}{\Bbb{Z}+\Bbb{Z}\tau}$ where $\tau$ lies in the upper half plane, the existence of such an $f$ puts certain constraints on $\tau$: 

\begin{prop}\label{classification}
With hypothesis mentioned above, $\tau$ and $f$ should be of the form of 
$$\tau=\frac{1}{2b}\left(u+\sqrt{4D-a^2}{\rm{i}}\right)\quad [z]\mapsto \left[\left(\frac{a-u}{2}+b\tau\right)z\right]$$
 where $u,b,a$ are integers with $\mid a\mid <2\sqrt{D}$ and $4b|u^2+4D-a^2$.
\end{prop}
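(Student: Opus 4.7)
The plan is to lift $f$ to a linear map on the universal cover $\Bbb{C}$ and read off the classification from integrality constraints on the lattice.

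First, any holomorphic endomorphism of $\mathcal{E}=\Bbb{C}/\Lambda$ with $\Lambda=\Bbb{Z}+\Bbb{Z}\tau$ lifts to multiplication $z\mapsto\alpha z$ for some $\alpha\in\Bbb{C}$ satisfying $\alpha\Lambda\subseteq\Lambda$. Expanding $\alpha\cdot 1$ and $\alpha\cdot\tau$ in the basis $\{1,\tau\}$ produces integers $m,n,p,q$ with $\alpha=m+n\tau$ and $\alpha\tau=p+q\tau$. Standard theory identifies the degree of the isogeny with the determinant $mq-np$, which we equate to $D$. Substituting $\alpha=m+n\tau$ into $\alpha\tau=p+q\tau$ gives the quadratic relation
$$n\tau^{2}-(q-m)\tau-p=0.$$
Because $D$ is assumed non-square, we cannot have $n=0$ (otherwise $\alpha=m\in\Bbb{Z}$ and $D=m^2$), so $\tau$ is the unique root of the above quadratic lying in the upper half plane. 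After possibly replacing $\alpha$ by $-\alpha$, I may assume $n>0$, and then the branch of the square root is forced by $\Im(\tau)>0$.

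Next, I introduce the change of variables $b:=n$, $u:=q-m$, $a:=q+m$. The crucial simplification is that the discriminant of the quadratic, combined with the degree identity $mq-np=D$, collapses to
$$(q-m)^{2}+4np=(q-m)^{2}+4(mq-D)=(q+m)^{2}-4D=a^{2}-4D.$$
Since $\tau\notin\Bbb{R}$ we must have $a^{2}-4D<0$, whence $|a|<2\sqrt{D}$, and the quadratic formula yields
$$\tau=\frac{u+\sqrt{4D-a^{2}}\,{\rm i}}{2b},\qquad \alpha=m+b\tau=\frac{a-u}{2}+b\tau,$$
which is exactly the asserted shape.

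Finally, I verify the divisibility condition. The value of $p$ determined by $np=mq-D$ and $q=u+m$ can be rewritten as
$$p=\frac{m^{2}+mu-D}{b}=\frac{a^{2}-u^{2}-4D}{4b},$$
so its integrality is precisely $4b\mid u^{2}+4D-a^{2}$. The auxiliary parity requirement $a\equiv u\pmod{2}$ (needed for $m=(a-u)/2$ to be an integer) follows for free, since $4\mid 4b$ forces $u^{2}\equiv a^{2}\pmod{4}$, and an odd-plus-even pair cannot square to the same residue.

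The argument is largely bookkeeping, and I expect no serious obstacle. The only step requiring care is the substitution that turns the discriminant $(q-m)^{2}+4np$ into the clean expression $a^{2}-4D$; once that identity is in hand, both the formula for $\tau$ and the divisibility condition fall out directly, while the bound $|a|<2\sqrt{D}$ is simply the condition that $\tau$ is genuinely non-real.
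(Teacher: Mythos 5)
The paper gives no proof of this proposition (it is dismissed with ``Proof is not difficult''), and your argument is exactly the standard one that is surely intended: lift $f$ to multiplication by $\alpha$ with $\alpha\Lambda\subseteq\Lambda$, record the integer matrix of $\alpha$ in the basis $\{1,\tau\}$, and translate the degree and integrality conditions through the substitution $b=n$, $u=q-m$, $a=q+m$; the discriminant identity $(q-m)^2+4np=a^2-4D$ and the computation $p=\frac{a^2-u^2-4D}{4b}$ both check out. The only step deserving a word of caution is ``after possibly replacing $\alpha$ by $-\alpha$ I may assume $n>0$'': this replaces $f$ by $[-1]\circ f$, a genuinely different endomorphism, so the stated normal form (which forces $\mathrm{Im}(\alpha)>0$, since $\frac{a-u}{2}+b\tau=\frac{a+\sqrt{4D-a^2}\,\mathrm{i}}{2}$) is attained only up to this sign; the paper's own statement is equally loose here, and the point is immaterial for the rest of the paper, which only uses $\tau$ and $N(\alpha)=D$.
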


\noindent Proof is not difficult. Furthermore, in such a situation the ring ${\rm{End}}(\mathcal{E})$ of endomorphisms of this elliptic curve is an order in the imaginary quadratic field $\Bbb{Q}\left(\sqrt{a^2-4D}\right)$ 
whose conductor divides $v$ where $v$ is the largest natural number with $4v^2 |4D-a^2$ for even $a$'s or the largest odd number with $v^2 |4D-a^2$ for odd $a$'s. 
This result may be particularly useful for determining the Hilbert class field of an imaginary quadratic field $K$ if our goal of calculating the value of $j$-invariant at the points $\tau=\frac{1}{2b}\left(u+\sqrt{4D-a^2}{\rm{i}}\right)$ of the upper half plane appeared before is achieved: 
describe $K$ as $\Bbb{Q}\left(\sqrt{a^2-4D}\right)$ where $D\in\Bbb{N}$ is not a perfect square and $4D-a^2>0$ is not divisible by any perfect square exceeding four. Then adjoining any $j\left(\tau=\frac{1}{2b}\left(u+\sqrt{4D-a^2}{\rm{i}}\right)\right)$, where as in Proposition \ref{classification}  $4b\mid u^2+4D-a^2$, to $K$ 
generates its Hilbert class field  since in these cases $\mathcal{E}=\frac{\Bbb{C}}{\Bbb{Z}+\Bbb{Z}\tau}$ has complex multiplication by $\mathcal{O}_K$. 

Also fixing $a,D$ in \ref{classification}, set of $\tau$'s in the form of $\tau=\frac{1}{2b}\left(u+\sqrt{4D-a^2}{\rm{i}}\right)$ with $4b|u^2+4D-a^2$ is invariant under the action of ${\rm{SL}}_2(\Bbb{Z})$ since one can easily check that for any 
$\begin{bmatrix}
\alpha &\beta\\
\gamma &\delta
\end{bmatrix}$
 in this group, assuming $\tau^{\prime}=\frac{\alpha\tau+\beta}{\gamma\tau+\delta}$:
$$\tau^{\prime}=\frac{1}{2b^{\prime}}\left(u^{\prime}+\sqrt{4D-{a^2}}{\rm{i}}\right) \quad 4b^{\prime}|{u^{\prime}}^2+4D-a^2,b^{\prime}=b.{\rm{\bf{N}}}(\gamma\tau+\delta)$$
where ${\rm{\bf{N}}}$ denotes the norm in the quadratic number field $\Bbb{Q}\left(\sqrt{a^2-4D}\right)$. This gives us a way to prove that the number of the ${\rm{SL}}_2(\Bbb{Z})$-orbits of these points is finite.  In fact, we can associate with each $\tau=\frac{1}{2b}\left(u+\sqrt{4D-a^2}{\rm{i}}\right)$ the binary quadratic form
\begin{equation*}
(x,y)\mapsto b.{\rm{\bf{N}}}(x+\tau y)=bx^2+uxy+\frac{u^2+4D-a^2}{4b}y^2
\end{equation*}
which has integer coefficients and discriminant $a^2-4D$. The action of ${\rm{SL}}_2(\Bbb{Z})$ on $\tau$'s in Proposition \ref{classification} (for fixed $D,a$) translates to its action on the corresponding binary quadratic forms. 
Therefore number of orbits in this action is finite since number of  ${\rm{SL}}_2(\Bbb{Z})$-equivalence classes of binary quadratic forms with discriminant $a^2-4D$ is finite using a classic theorem of number theory that relates the  equivalence classes of binary quadratic forms with prescribed discriminant to the class group of the corresponding quadratic number field.
So having knowledge of the class group of the quadratic field in consideration, one may list all of these orbits explicitly by exhibiting representatives. We must point out that in fact we are dealing with representations $\frac{1}{2b}\left(u+\sqrt{4D-a^2}{\rm{i}}\right)$ of $\tau$ rather than solely this number. 
Therefore number of  ${\rm{SL}}_2(\Bbb{Z})$-orbits of expressions  $\tau=\frac{1}{2b}\left(u+\sqrt{4D-a^2}{\rm{i}}\right)$ for some prescribed $D$ and $a$ may differ from the class number of $\Bbb{Q}\left({\sqrt{a^2-4D}}\right)$ because there are cases where the associated binary forms are not 
primitive\footnote{For example, let $D=3,a=0$. 
Points $\sqrt{3}{\rm{i}}=\frac{1}{2}\left(\sqrt{12}{\rm{i}}\right)$ where $b=1,u=0$ and $\frac{1+\sqrt{3}{\rm{i}}}{2}=\frac{1}{2\times 2}\left(2+\sqrt{12}{\rm{i}}\right)$ where $b=u=2$ are not equivalent under ${\rm{SL}}_2(\Bbb{Z})$'s action although $\Bbb{Q}\left(\sqrt{-3}\right)$ admits unique factorization.}. 
For instance, when an odd prime divisor $p$ of $u,b$ satisfies $p^2\mid 4D-a^2$ or when $2\mid u,b,a$ and $\frac{ (\frac{u}{2})^2+D-(\frac{a}{2})^2}{b}$ is even.  In summary:

\begin{prop} \label{finiteness}
For a fixed non-square $D\in\Bbb{N}$, the number of  isomorphism classes of endomorphisms $f:\mathcal{E}\rightarrow\mathcal{E}$ where $\mathcal{E}$ is an elliptic curve and ${\rm{deg}}f=D$ is finite. Equivalently, number of ${\rm{SL}}_2(\Bbb{Z})$-orbits of points  $\tau=\frac{1}{2b}\left(u+\sqrt{4D-a^2}{\rm{i}}\right)$ of the upper half plane with $u,b,a\in\Bbb{Z}$,$\mid a\mid<2\sqrt{D}$ and $4b|u^2+4D-a^2$   is finite.  
\end{prop}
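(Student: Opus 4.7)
The bulk of the argument is already sketched in the paragraphs preceding the statement, so the plan is essentially to organize that material and fill in the one piece that is left implicit (compatibility of the $\mathrm{SL}_2(\mathbb{Z})$-actions). The plan is to break the problem into finitely many subproblems indexed by $a$, and then reduce each subproblem to the classical finiteness of $\mathrm{SL}_2(\mathbb{Z})$-equivalence classes of integral binary quadratic forms of a fixed discriminant.

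First, I would reduce the first (geometric) formulation to the second (arithmetic) one via Proposition \ref{classification}. Isomorphism classes of pairs $(\mathcal{E},f)$ with $\deg f=D$ correspond to pairs $(\tau,f)$ with $\tau$ in the upper half plane, modulo the natural action of $\mathrm{SL}_2(\mathbb{Z})$ on $\tau$ (which simultaneously conjugates $f$). By Proposition \ref{classification} every such $\tau$ admits an expression $\tau=\tfrac{1}{2b}(u+\sqrt{4D-a^2}\,\mathrm{i})$ satisfying the stated divisibility, so finiteness of the set of $\mathrm{SL}_2(\mathbb{Z})$-orbits of such expressions implies finiteness of the set of isomorphism classes.

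Second, since $|a|<2\sqrt{D}$, the integer $a$ ranges over a finite set, so it suffices to fix $a$ and prove finiteness of $\mathrm{SL}_2(\mathbb{Z})$-orbits of expressions $\tau=\tfrac{1}{2b}(u+\sqrt{4D-a^2}\,\mathrm{i})$ with $4b\mid u^2+4D-a^2$. To each such expression attach the integral binary quadratic form
\[
Q_{b,u}(x,y)\;=\;b\,\mathbf{N}(x+\tau y)\;=\;bx^2+uxy+\frac{u^2+4D-a^2}{4b}y^2
\]
of discriminant $a^2-4D<0$ (the divisibility condition guarantees integrality). I would then verify directly, using the transformation rule $b'=b\cdot\mathbf{N}(\gamma\tau+\delta)$ recorded in the excerpt, that the $\mathrm{SL}_2(\mathbb{Z})$-action on the $\tau$'s descends to the standard $\mathrm{SL}_2(\mathbb{Z})$-action on binary quadratic forms of discriminant $a^2-4D$ by change of variables. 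This step is essentially a bookkeeping computation: one checks that $Q_{b',u'}(x,y)=Q_{b,u}(\delta x-\beta y,\,-\gamma x+\alpha y)$, which is the step I would expect to verify most carefully since the subscripts $(b',u')$ depend on $(b,u,\alpha,\beta,\gamma,\delta)$ in a slightly tangled way.

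Finally, I would invoke the classical theorem (Gauss) that the set of $\mathrm{SL}_2(\mathbb{Z})$-equivalence classes of integral binary quadratic forms of a given negative discriminant is finite, with cardinality equal to the Hurwitz--Kronecker class number. Since the map from our expressions to equivalence classes of binary forms is well defined and finite-to-one (indeed, as the footnote warns, different expressions may produce the same primitive form after scaling, but only finitely many scalings can occur because the content of $Q_{b,u}$ divides a quantity bounded in terms of $a^2-4D$), the set of orbits is finite. Summing over the finitely many admissible $a$ completes the proof. The only genuine obstacle I anticipate is being precise about the finite-to-one-ness when non-primitive forms appear; if one prefers to avoid that, one can instead observe directly that $\mathrm{SL}_2(\mathbb{Z})$ acts on the lattice of integral forms of discriminant $a^2-4D$ with finitely many orbits, and that every expression $\tau$ lies over one of these orbits, which suffices for finiteness without any need to track primitivity.
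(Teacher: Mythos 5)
Your proposal is correct and follows essentially the same route as the paper: both reduce the question to the finitely many admissible values of $a$, attach to each expression $\tau=\frac{1}{2b}\left(u+\sqrt{4D-a^2}{\rm{i}}\right)$ the integral binary quadratic form $bx^2+uxy+\frac{u^2+4D-a^2}{4b}y^2$ of discriminant $a^2-4D$, observe that the ${\rm{SL}}_2(\Bbb{Z})$-action on the $\tau$'s is carried to the standard action on forms, and invoke the classical finiteness of equivalence classes of forms of fixed discriminant. Your extra care about the finite-to-one-ness and non-primitive forms only fills in a detail the paper relegates to a footnote; it does not change the argument.
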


There is also another point of view which allows us to calculate  $\lambda$'s corresponding to $\tau$'s mentioned here and therefore $j(\tau)$: 
writing $\mathcal{E}$ in its Legendre form $y^2=x(x-1)(x-\lambda)$, the degree $D$ endomorphism $f:\mathcal{E}\rightarrow\mathcal{E}$ is simply a degree $D$ unramified cover of the underlying compact Riemann surface of genus $1$ with a fixed point that respects the hyperelliptic involution.
Without loss of generality, we may assume that a fixed point of degree $D$ covering $f$ of $y^2=x(x-1)(x-\lambda)$ is the point at infinity where $x,y\to\infty$. This together with the fact that $f$ respects the hyperelliptic involution $(x,y)\mapsto(x,-y)$, imply that $f:\left\{y^2=x(x-1)(x-\lambda)\right\}\rightarrow\left\{y^2=x(x-1)(x-\lambda)\right\}$ can be written as $(x,y)\mapsto\big(h(x),g(x)y\big)$ where $h,g$ 
are meromorphic function on $\Bbb{CP}^1$ with $h(\infty)=\infty$. 
Considering ramifications of maps  in the commutative diagram:
\begin{equation*} \tag{$\star$}
\xymatrix{\mathcal{E}  \ar[r]^f \ar[d]  & \mathcal{E} \ar[d]\\
                 \Bbb{CP}^1 \ar[r]^h            &  \Bbb{CP}^1 }
\end{equation*}
where top row is a degree $D$ unramified covering  and columns are ramified 2-fold covers with critical values $0,1,\lambda,\infty$, we deduce the following constraint on $h$ and hence on $\lambda$:
\begin{prop}\label{constraint}
With same notations, $\lambda\in\Bbb{C}-\{0,1\}$ and $h:\Bbb{CP}^1\rightarrow\Bbb{CP}^1$ is a degree $D$ meromorphic function that satisfies:
\begin{itemize}
\item $h(\infty)=\infty$
\item Multiplicity of $h$ at each critical point is $2$ (hence there are $2D-2$ critical points).
\item Critical values of $h$ belong to the set $\{0,1,\lambda,\infty\}$.
\item $h$ has simple points $0,1,\lambda,\infty$ whose value at any of them  lies in $\{0,1,\lambda,\infty\}$.
\end{itemize}
\end{prop}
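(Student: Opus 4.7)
The plan is to read off the four bulleted properties of $h$ from the ramification structure of the commutative diagram $(\star)$. A degree count along either path of $(\star)$ gives $2\cdot\deg h=\deg f\cdot 2=2D$, so $\deg h=D$. Since the point at infinity on $\mathcal{E}$ (the identity of the group law) lies above $\infty\in\Bbb{CP}^1$ under the hyperelliptic projection $\pi$ and is fixed by $f$, commutativity of $(\star)$ forces $h(\infty)=\pi(f(\infty))=\pi(\infty)=\infty$, giving the first bullet.

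For the remaining bullets the key input is multiplicativity of ramification in $(\star)$: for every $p\in\mathcal{E}$,
\[
e_{\pi}(f(p))\cdot e_f(p)\;=\;e_h(\pi(p))\cdot e_{\pi}(p).
\]
Because $f$ is an isogeny it is everywhere unramified, so $e_f(p)=1$, and the identity collapses to $e_{\pi}(f(p))=e_h(\pi(p))\cdot e_{\pi}(p)$. Recall that $e_{\pi}\in\{1,2\}$, with $e_{\pi}(p)=2$ exactly when $\pi(p)\in\{0,1,\lambda,\infty\}$.

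Next I split into two cases. If $\pi(p)\in\{0,1,\lambda,\infty\}$, then $e_{\pi}(p)=2$ while $e_{\pi}(f(p))\le 2$, which forces $e_h(\pi(p))=1$ and $e_{\pi}(f(p))=2$; the latter says $f(p)$ again lies above $\{0,1,\lambda,\infty\}$, i.e.\ $h(\pi(p))\in\{0,1,\lambda,\infty\}$. This is precisely the fourth bullet. If instead $\pi(p)\notin\{0,1,\lambda,\infty\}$, then $e_{\pi}(p)=1$ and $e_h(\pi(p))=e_{\pi}(f(p))\in\{1,2\}$; the value $2$ occurs precisely when $h(\pi(p))\in\{0,1,\lambda,\infty\}$. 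This yields bullets two and three: at every critical point of $h$ the multiplicity is exactly $2$, and the corresponding critical values lie in $\{0,1,\lambda,\infty\}$.

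Finally I would count critical points by Riemann--Hurwitz applied to $h:\Bbb{CP}^1\to\Bbb{CP}^1$: $2=2D-\sum_{x}(e_h(x)-1)$, so the total ramification equals $2D-2$, and since each critical point contributes $e_h-1=1$, there are exactly $2D-2$ of them. None of the steps is deep; the one place I would be careful is working consistently on the smooth projective models of $\mathcal{E}$ and $\Bbb{CP}^1$ so that the multiplicativity of ramification and the behaviour of $\pi$ at the point above $\infty$ on the Weierstrass model are handled uniformly, after which the four assertions fall out of the case analysis above.
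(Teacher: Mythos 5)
Your argument is correct and is exactly the ramification analysis the paper alludes to when it says the constraints are deduced "considering ramifications of maps in the commutative diagram $(\star)$": the multiplicativity $e_{\pi}(f(p))\,e_f(p)=e_h(\pi(p))\,e_{\pi}(p)$ together with $e_f\equiv 1$ and $e_{\pi}\in\{1,2\}$ yields all four bullets, and Riemann--Hurwitz gives the count $2D-2$. The paper leaves these details implicit, so your write-up simply supplies the intended proof.
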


Fixing a non-square $D\in\Bbb{N}$, our objective is to study those $\lambda$ and meromorphic functions $h(x)$ on Riemann sphere for which the conditions mentioned in Proposition \ref{constraint} hold so that possible choices for $j(\lambda)=256\frac{(\lambda^2-\lambda+1)^3}{(\lambda^2-\lambda)^2}$   can be determined. To obtain $h(x)$ and $\lambda$, one should solve a rather complicated polynomial system whose unknowns include $\lambda$ and roots and poles of $h$. $j(\lambda)$'s calculated this way will represent all possible values for $j(\tau)$ where $\tau$ is related to $D$ as in Proposition \ref{classification}. In other words, via studying meromorphic functions with such properties, there will be a list including all finitely many values which $j$-invariant achieves on the set of $\tau$'s in the upper
 half plane associated with $D$ by \ref{classification}. Next, using truncation of the $q$-expansion of $j:\Bbb{H}\rightarrow\Bbb{C}$, it is an easy task to determine each $\tau$ corresponds to which $j(\lambda)$ in our list. This is the main strategy we are going to pursue in this paper. 

Before writing down these systems in their general form, it is worth noting two remarks that might serve to simplify calculations. First, since the map $f$ from the elliptic curve $y^2=x(x-1)(x-\lambda)$ to itself is
given by $(x,y)\mapsto\big(h(x),g(x)y\big)$, in $\Bbb{C}(x)$ the element $\frac{h(x)(h(x)-1)(h(x)-\lambda)}{x(x-1)(x-\lambda)}$  is a perfect square, i.e. $g(x)^2$, which is another constraint on $h(x)$ and $\lambda$. 
Secondly, there are symmetries for pairs $\big(h(x),\lambda\big)$ described in Proposition \ref{constraint} that make computations shorter. 
Note that if the pair $\big(h(x),\lambda\big)$ satisfies Proposition\ref{constraint}, then so are pairs 
$\big(1-h(1-x),1-\lambda\big)$ and $\big(\frac{1}{\lambda},\frac{h(\lambda x)}{\lambda}\big)$ derived from conjugating $h(x)$ with M\"obius  
transformations $x\mapsto1-x$ and $x\mapsto\lambda x$ respectively. 
Former fixes infinity and interchanges $0,1$ while latter fixes $0,\infty$ and maps $1$ to $\lambda$. $\lambda\mapsto 1-\lambda$
 and $\lambda\mapsto\frac{1}{\lambda}$ generate the action of the symmetric group $S_3$ on $\Bbb{C}-\{0,1\}$ 
that leaves $j(\lambda)$ invariant and hence changing $h(x)$ with these transformations if necessary, one may permute the roles of $0,1,\lambda$ arbitrarily.

\begin{example}\label{D=2}
We first start with the simplest case where $D=2$. In Proposition \ref{classification}, $a\in\{0,1,2\}$ which yields $\tau$'s belonging to fields $\Bbb{Q}\left(\sqrt{-1}\right)$,$\Bbb{Q}\left(\sqrt{-2}\right)$ and $\Bbb{Q}\left(\sqrt{-7}\right)$ all with class number one. Hence there are three distinct orbits for $D=2$, those of ${\rm{i}},\sqrt{2}{\rm{i}},\frac{1+\sqrt{7}{\rm{i}}}{2}$. In order to find the corresponding $\lambda$'s, it is more judicious to apply the previously appeared constraint rather than the constraints in Proposition \ref{constraint}: $h(x)=\frac{P(x)}{Q(x)}\in\Bbb{C}(x)$ where ${\rm{deg}} Q<{\rm{deg}} P=2$ along with $\lambda\neq 0,1$ are such that $\frac{h(x)(h(x)-1)(h(x)-\lambda)}{x(x-1)(x-\lambda)}=\frac{P(x)(P(x)-Q(x))(P(x)-\lambda Q(x))}{x(x-1)(x-\lambda)Q(x)^3}$   is a perfect square in $\Bbb{C}(x)$. Conditioning on $Q(x)$, a straightforward argument shows that it coincides with either $x$, $x-1$ or $x-\lambda$ up to a constant. With a little effort, in each case  $h(x)$ and $\lambda$ will be determined exploiting the fact that $\frac{P(x)(P(x)-Q(x))(P(x)-\lambda Q(x))}{x(x-1)(x-\lambda)Q(x)^3}$ is a square in $\Bbb{C}(x)$:
\begin{equation*}
\begin{split}
&{\rm{\bf{I.}}}\,\lambda=-1,h(x)=\pm\frac{i(x^2-1)}{2x}\qquad {\rm{\bf{II.}}}\,\lambda=3\pm2\sqrt{2},h(x)=-\frac{1}{2}\frac{(x-1)(x-3\mp2\sqrt{2})}{x}\\
&{\rm{\bf{III.}}}\,\lambda=\frac{1\pm3\sqrt{7}{\rm{i}}}{2},h(x)=\frac{\frac{-3\mp\sqrt{7}i}{8}(x-1)(x-\frac{1\pm 3\sqrt{7}{\rm{i}}}{2})+x}{x}
\end{split}
\end{equation*}
\begin{equation*}
\begin{split}
&{\rm{\bf{IV.}}}\,\frac{1}{\lambda},\frac{h(\lambda x)}{\lambda}\,\text{for}\,\lambda,h(x)\,\text{in}\,{\rm{\bf{III}}}\qquad {\rm{\bf{V.}}}\,1-\lambda,1-h(1-x)\,\text{for}\,\lambda,h(x)\,\text{in}\,{\rm{\bf{I}}},{\rm{\bf{II}}},{\rm{\bf{III}}},{\rm{\bf{IV}}}\\
&{\rm{\bf{VI.}}}\,\frac{1}{\lambda},\frac{h(\lambda x)}{\lambda}\,\text{for}\,\lambda,h(x)\,\text{in}\,{\rm{\bf{V}}}
\end{split}
\end{equation*}
 Hence the value of the modular function $j$ at any of points ${\rm{i}},\sqrt{2}{\rm{i}},\frac{1+\sqrt{7}{\rm{i}}}{2}$ of the upper half plane coincides with the value of $j(\lambda)=256\frac{(\lambda^2-\lambda+1)^3}{(\lambda^2-\lambda)^2}$ at one of points $\lambda=-1,3\pm2\sqrt{2},\frac{1\pm3\sqrt{7}{\rm{i}}}{2}$. Latter values are $1728,20^3,-15^3$. Thus aside from $\tau={\rm{i}}$ which corresponds to the square lattice and the elliptic curve $y^2=x^3-x$ ($\lambda=-1$) where $j=1728$, one observes that $\left\{j\left(\sqrt{2}{\rm{i}}\right),j\left(\frac{1+\sqrt{7}{\rm{i}}}{2}\right)\right\}=\{20^3,-15^3\}$. Applying $q$-expansion to distinguish them, with help of a numerical software:
\begin{equation*}
\begin{split}
&j\left(\sqrt{2}{\rm{i}}\right)\approx {\rm{e}}^{2{\rm{\pi}}\sqrt{2}}+744+196884 {\rm{e}}^{-2{\rm{\pi}}\sqrt{2}}+21493760 {\rm{e}}^{-4{\rm{\pi}}\sqrt{2}}\approx7999.997704\\
&j\left(\frac{1+\sqrt{7}{\rm{i}}}{2}\right)\approx- {\rm{e}}^{{\rm{\pi}}\sqrt{7}}+744-196884 {\rm{e}}^{{\rm{\pi}}\sqrt{7}}+21493760 {\rm{e}}^{-2{\rm{\pi}}\sqrt{7}}-864299970 {\rm{e}}^{-3{\rm{\pi}}\sqrt{7}}\\
&\approx-3375.000073
\end{split}
\end{equation*}
Comparing with $20^3=8000$ and $-15^3=-3375$, we conclude that $j\left(\sqrt{2}{\rm{i}}\right)=20^3$ and $j\left(\frac{1+\sqrt{7}{\rm{i}}}{2}\right)=-15^3$.

\end{example}

Next, we will try to write a general system of polynomial equations whose solutions provide a  list of possible values of $j$ at those $\tau$'s that were associated with our fixed $D>2$ as in 
Proposition \ref{classification}. Let the degree $D$ meromorphic function $h$ on $\Bbb{CP}^1$ satisfy constraints listed in Proposition \ref{constraint}. 
It has $2D-2$ critical points due to Riemann-Hurwitz formula, each with multiplicity two, and at most four critical values, namely $0,1,\lambda,\infty$. 
Above any of them there are at most $\lfloor\frac{D}{2}\rfloor$ critical points. 
We conclude that all of them are critical values as otherwise there will be at most $3\lfloor\frac{D}{2}\rfloor$ critical points. $3\lfloor\frac{D}{2}\rfloor<2D-2$  for any $D\in\Bbb{N}$ except for $D=1,4$ (excluded because they are perfect squares.) and $D=2$ (which was studied separately in Example \ref{D=2}). 
Thus critical values of $h:\Bbb{CP}^1\rightarrow\Bbb{CP}^1$ are precisely $0,1,\lambda,\infty$. 
Moreover, fiber above each of them contains at most $\lfloor\frac{D}{2}\rfloor$ critical points while on the other hand, number of critical points of $h$ were $2D-2$. \\
When $D$ is odd, $4\lfloor\frac{D}{2}\rfloor=2D-2$ and therefore above each critical value $0, 1, \lambda$ or infinity $h$ has one simple point and $\frac{D-1}{2}$ critical points all of multiplicity two. 
From Proposition \ref{constraint}, the simple point $\infty$ is mapped to $\infty$ and furthermore $0, 1, \lambda$  are simple points in critical fibers. Hence from previous discussion, they lie in distinct critical fibers over critical values $0, 1, \lambda$, that is, for a permutation $\sigma$ of $\{0, 1, \lambda\}$ : $h(x)=\sigma(x)  \quad\forall x\in\{0,1,\lambda\}$. 
There are three different situations: $\sigma$ is either identity, or a transposition or a three cycle. Because of aforementioned symmetries between $0,1,\lambda$,  
two former cases reduce to $\sigma(0)=0,\sigma(1)=\lambda,\sigma(\lambda)=1$ and $\sigma(0)=1,\sigma(1)=\lambda,\sigma(\lambda)=0$ respectively without any change in the value of $j(\lambda)$. 
In conclusion:

\begin{cor}\label{oddcase}
For odd non-square $D$, degree $D$ meromorphic function  $h:\Bbb{CP}^1\rightarrow\Bbb{CP}^1$ in Proposition \ref{constraint} may be assumed to satisfy:
\begin{itemize}
\item Critical values of $h$ are precisely $0,1,\lambda,\infty$ over each $h$ possesses a simple point and $\frac{D-1}{2}$ points of multiplicity two.
\item $h(\infty)=\infty$ and either
$\begin{cases}
h(0)=0\\
h(1)=1\\
h(\lambda)=\lambda
\end{cases}$
or
$\begin{cases}
h(0)=0\\
h(1)=\lambda\\
h(\lambda)=1
\end{cases}$
or
$\begin{cases}
h(0)=1\\
h(1)=\lambda\\
h(\lambda)=0
\end{cases}$.
\end{itemize}
\end{cor}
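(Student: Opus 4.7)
The plan is to combine a Riemann--Hurwitz count on $h$ with the $S_3$-action described just before Example \ref{D=2}. I organize the argument in three stages.

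First, I verify that all four of $0,1,\lambda,\infty$ are genuine critical values of $h$. By the second bullet of Proposition \ref{constraint}, each critical point has multiplicity exactly $2$, so Riemann--Hurwitz applied to $h:\Bbb{CP}^1\to\Bbb{CP}^1$ forces the total number of critical points to equal $2D-2$. Since any fiber of $h$ can contain at most $\lfloor D/2\rfloor$ critical points, omitting any one of the four candidate critical values would leave at most $3\lfloor D/2\rfloor$ critical points, which is strictly less than $2D-2$ for every odd $D\ge 3$ (the inequality reduces to $3/2<2$ after dividing by $D-1$).

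Second, I exploit the fact that for odd $D$ the remaining inequality is saturated: $4\lfloor D/2\rfloor = 2(D-1) = 2D-2$. Consequently each fiber over $0,1,\lambda,\infty$ must contain exactly $\frac{D-1}{2}$ critical points (contributing $D-1$ to the fiber's degree) and exactly one simple point (accounting for the remaining sheet). The first bullet of Proposition \ref{constraint} identifies the simple point in the fiber over $\infty$ as $\infty$ itself, and the fourth bullet identifies $0,1,\lambda$ as simple points whose images lie in $\{0,1,\lambda,\infty\}$; since the slot over $\infty$ is already filled, $\{h(0),h(1),h(\lambda)\}$ must exhaust $\{0,1,\lambda\}$. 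Hence there is a permutation $\sigma\in S_3$ with $h(x)=\sigma(x)$ for each $x\in\{0,1,\lambda\}$.

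Third, I fold the six possibilities for $\sigma$ into three by invoking the symmetries $(h(x),\lambda)\mapsto(1-h(1-x),1-\lambda)$ and $(h(x),\lambda)\mapsto\bigl(\tfrac{h(\lambda x)}{\lambda},\tfrac{1}{\lambda}\bigr)$ recorded before Example \ref{D=2}. Each preserves the conditions of Proposition \ref{constraint} as well as the value of $j(\lambda)$, while inducing the two generating transpositions of the natural $S_3$-action permuting $\{0,1,\lambda\}$; hence they conjugate $\sigma$ by arbitrary elements of $S_3$. Since conjugation preserves cycle type, the orbit structure on $\sigma$ coincides with the three conjugacy classes of $S_3$, and one may choose the representatives $\mathrm{id}$, the transposition $(1\,\lambda)$, and the $3$-cycle $(0\,1\,\lambda)$ exhibited in the statement. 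I expect the only obstacle to be bookkeeping---verifying the arithmetic bound in the first stage and confirming that the two listed Möbius conjugations really generate the full $S_3$-action on the branch points---neither of which poses conceptual difficulty.
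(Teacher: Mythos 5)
Your argument is correct and follows the paper's own route essentially step for step: the Riemann--Hurwitz count of $2D-2$ double critical points forcing all four of $0,1,\lambda,\infty$ to be critical values, the saturation $4\lfloor D/2\rfloor=2D-2$ for odd $D$ giving one simple point per critical fiber and hence a permutation $\sigma$ of $\{0,1,\lambda\}$, and the reduction to three representatives via the two M\"obius symmetries. Your explicit remark that these symmetries act by conjugation and therefore preserve cycle type is a slightly cleaner justification of the final reduction than the paper's terse ``because of aforementioned symmetries,'' but it is the same argument.
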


\noindent When $D$ is even, there is at most $\lfloor\frac{D}{2}\rfloor=\frac{D}{2}$ 
critical points in any of the four critical fibers. 
According to conditions in Proposition \ref{constraint}, the fiber above the critical point $\infty$ contains the simple point $\infty$ which 
yields the upper bound $\frac{3D}{2}+(\frac{D}{2}-1)=2D-1$ for the number of critical points of $h$ albeit this number is actually $2D-2$. 
We deduce that there are only two possibilities: either $h$ has $\frac{D}{2}-2$ critical points over $\infty$ whereas number of its critical points over any of critical values $0,1,\lambda$ 
achieves the maximum $\frac{D}{2}$  or $h$ has $\frac{D}{2}-1$ critical points in fiber above $\infty$ and also in fiber above one of $0,1,\lambda$ and possesses $\frac{D}{2}$ critical points over any of remaining two values. 
When the latter possibility occurs, meromorphic function $h:\Bbb{CP}^1\rightarrow\Bbb{CP}^1$ of even degree $D$ has four critical values $0,1,\lambda,\infty$ where for two of them, including $\infty$, over each $h$ has $\frac{D}{2}-1$ critical points and two simple points and
furthermore $\frac{D}{2}$ critical points above any of two remaining critical values in the set $\{0,1,\lambda,\infty\}$. 
Of course all critical points are of multiplicity two. Employing aforementioned symmetries, we may assume that the critical value other than $\infty$ whose fiber contains a simple point is $0$. 
Constraints in Proposition \ref{constraint} imply that points  $0,1,\lambda,\infty$ constitute four simple points in critical fibers $h^{-1} (0),h^{-1} (\infty)$ and besides, 
$\infty\in h^{-1} (\infty)$. Therefore either $0,\lambda\in h^{-1} (0),1\in h^{-1} (\infty)$ or $0,1\in h^{-1} (0),\lambda\in h^{-1} (\infty)$ or $1,\lambda\in h^{-1} (0),0\in h^{-1} (\infty)$. 
Replacing $\big(h(x),\lambda\big)$ with $\big(\frac{h(\lambda x)}{\lambda},\frac{1}{\lambda}\big)$, first case transforms to second one.  
We conclude that there are only three different cases for even $D$'s which are outlined below:

\begin{cor}\label{evencase}
For even non-square $D>2$, degree $D$ meromorphic function  $h:\Bbb{CP}^1\rightarrow\Bbb{CP}^1$ in Proposition \ref{constraint} may be assumed to satisfy:
\begin{itemize}
\item Critical values of $h$ are precisely $0,1,\lambda,\infty$. $h$ either possesses one simple point and $\frac{D}{2}-1$ points of multiplicity two over each of $0,\infty$ and $\frac{D}{2}$ points of multiplicity two over each of $1,\lambda$ or it has $\frac{D}{2}$ points of multiplicity two over each of $0,1,\lambda$ and four simple points over $\infty$. 
\item Either
$h(0)=h(1)=h(\lambda)=h(\infty)=\infty$
or 
$\begin{cases}
h(1)=h(\lambda)=0\\
h(0)=h(\infty)=\infty
\end{cases}$\\
or
$\begin{cases}
h(0)=h(1)=0\\
h(\lambda)=h(\infty)=\infty
\end{cases}$.
\end{itemize}
\end{cor}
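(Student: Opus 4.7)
The plan is to mirror the Riemann--Hurwitz bookkeeping that produced Corollary \ref{oddcase}, while attending to the parity constraints imposed by $D$ being even. First, I would show exactly as in the odd case that the critical values of $h$ constitute the full set $\{0,1,\lambda,\infty\}$: if one were missing, the surviving three fibers could contribute at most $3\lfloor D/2\rfloor<2D-2$ critical points for $D>2$, contradicting Riemann--Hurwitz.

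For each critical value $v$ let $n_v$ denote the number of simple points and $k_v$ the number of critical points (each of multiplicity $2$) of $h$ in the fiber $h^{-1}(v)$; thus $n_v+2k_v=D$, forcing $n_v$ to be even. Summing against $\sum_v k_v=2D-2$ gives $\sum_v n_v=4$. By Proposition \ref{constraint} the four distinguished points $0,1,\lambda,\infty$ are themselves simple and land in $\{0,1,\lambda,\infty\}$, so they already account for these four simple points: no other simple point occurs in the critical fibers, and the tuple $(n_0,n_1,n_\lambda,n_\infty)$ records exactly how the set $\{0,1,\lambda,\infty\}$ is distributed among the four fibers.

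Because $h(\infty)=\infty$ and $n_\infty$ is even, $n_\infty\in\{2,4\}$. If $n_\infty=4$ then $n_0=n_1=n_\lambda=0$, yielding the configuration $h(0)=h(1)=h(\lambda)=h(\infty)=\infty$ with critical tally $(D/2,D/2,D/2,D/2-2)$. If $n_\infty=2$ then exactly one of $n_0,n_1,n_\lambda$ equals $2$ and the other two vanish, producing a critical tally permuting $(D/2,D/2,D/2-1,D/2-1)$ that matches the second numerical pattern in the statement. The latter case splits a priori into three sub-cases indexed by which pair of $\{0,1,\lambda\}$ sits above the finite critical value whose $n$ equals $2$. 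I would invoke the $S_3$-symmetry on $\{0,1,\lambda\}$ recalled just before Example \ref{D=2} to first normalize this nonzero finite index to $n_0=2$, and then use the residual involution $\lambda\mapsto 1/\lambda$, which swaps $1$ and $\lambda$ while fixing $0$ and $\infty$, to identify the sub-case $\{0,1\}\mapsto 0$ with $\{0,\lambda\}\mapsto 0$, leaving $\{1,\lambda\}\mapsto 0$ as a genuinely distinct class. This collapses the three sub-cases onto the two remaining normal forms listed.

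The main obstacle is precisely this last step: one must check that each M\"obius conjugation used to normalize the pair $(h,\lambda)$ truly preserves the ramification data of $h$, so that the transformed pair continues to satisfy Proposition \ref{constraint}, and one must verify that the stabilizer of the chosen normalization $n_0=2$ does not identify the remaining two cases. The rest of the argument is routine bookkeeping once the Riemann--Hurwitz count and Proposition \ref{constraint} are in hand.
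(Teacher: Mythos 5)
Your argument is correct and essentially the paper's own: the same Riemann--Hurwitz bookkeeping (your identity $\sum_v n_v=4$ is just a repackaging of the paper's comparison of the upper bound $2D-1$ with the actual count $2D-2$ of critical points), the same identification of $0,1,\lambda,\infty$ as the only simple points in critical fibers, and the same use of the conjugation $\big(h(x),\lambda\big)\mapsto\big(\frac{h(\lambda x)}{\lambda},\frac{1}{\lambda}\big)$ to merge the sub-case $\{0,\lambda\}\mapsto 0$ with $\{0,1\}\mapsto 0$. Two minor remarks: the inequality $3\lfloor D/2\rfloor<2D-2$ actually fails at $D=4$, which is harmless only because $D$ is assumed non-square; and your closing worry about the normalizations is already settled by the paper's earlier observation that the pairs $\big(1-h(1-x),1-\lambda\big)$ and $\big(\frac{h(\lambda x)}{\lambda},\frac{1}{\lambda}\big)$ again satisfy Proposition \ref{constraint}, while the question of whether the two surviving normal forms could be further identified is irrelevant to the corollary as stated.
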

 For any of three possibilities outlined in either of Corollaries \ref{oddcase} or \ref{evencase} for odd and even $D$'s respectively, it is easy to verify $\frac{h(x)(h(x)-1)(h(x)-\lambda)}{x(x-1)(x-\lambda)}$ is square of another meromorphic function. 
 Thus exhibiting such a pair $\big(h(x),\lambda\big)$ and then computing $j(\lambda)=256\frac{(\lambda^2-\lambda+1)^3}{(\lambda^2-\lambda)^2}$, always leads to a value of the  modular $j$-function at one point from our finite list of representatives of those $\tau$'s in the upper half plane which were associated with $D$ in Proposition \ref{classification}. 
In each of the six cases that appeared in Corollaries \ref{oddcase} and \ref{evencase}, in order to determine 
$\big(h(x),\lambda\big)$ 
there is a polynomial system of equations to solve that we will introduce below and everything reduces to finding their solutions, a task that is the main part of our computational method.

We shall explain the way to form the systems corresponding to these cases for the first set of conditions in Corollary \ref{oddcase} (i.e. when $h(0)=0,h(1)=1,h(\lambda)=\lambda$) and the remaining five cases can be dealt with similarly. 
Here $h$ is a degree $D$ meromorphic function on Riemann sphere with one simple zero and one simple pole at $0$ and $\infty$ respectively and $\frac{D-1}{2}$ zeros and $\frac{D-1}{2}$ poles each of multiplicity two. 
So $h$ can be written as
$h(x)=k\frac{x\prod_{i=1}^{\frac{D-1}{2}}(x-\alpha_i)^2}{\prod_{i=1}^{\frac{D-1}{2}}(x-\beta_i)^2}$.
Also only simple roots of numerators of $h(x)-1$ and $h(x)-\lambda$ are $1$ and $\lambda$ respectively with other $ \frac{D-1}{2}$ roots all with multiplicity two.  
The fact that forces them to be in the following forms:
$$h(x)-1=k\frac{(x-1)\prod_{i=1}^{\frac{D-1}{2}}(x-\gamma_i)^2}{\prod_{i=1}^{\frac{D-1}{2}}(x-\beta_i)^2}\quad h(x)-\lambda=k\frac{(x-\lambda)\prod_{i=1}^{\frac{D-1}{2}}(x-\delta_i)^2}{\prod_{i=1}^{\frac{D-1}{2}}(x-\beta_i)^2}$$
These identities imply $kx\prod_{i=1}^{\frac{D-1}{2}}(x-\alpha_i)^2-\prod_{i=1}^{\frac{D-1}{2}}(x-\beta_i)^2=k(x-1)\prod_{i=1}^{\frac{D-1}{2}}(x-\gamma_i)^2$ and $kx\prod_{i=1}^{\frac{D-1}{2}}(x-\alpha_i)^2-\lambda\prod_{i=1}^{\frac{D-1}{2}}(x-\beta_i)^2=k(x-\lambda)\prod_{i=1}^{\frac{D-1}{2}}(x-\delta_i)^2$. 
The equality of coefficients both sides of the equations yields a system consisting of $2D$ polynomial equations and $2+4(\frac{D-1}{2})=2D$ unknowns $k,\lambda,\alpha_i,\beta_i,\gamma_i,\delta_i\,(1\leq i\leq \frac{D-1}{2})$. 
Note that $k\neq 0$,$\lambda\neq 0,1$ and moreover $\alpha_i,\beta_i,\gamma_i,\delta_i$'s are pairwise distinct as they are different critical points of $h$. 
Using same argument for the two remaining cases in Corollary \ref{oddcase} and also cases mentioned  in Corollary \ref{evencase}, we arrive at our main Theorem:
\begin{thm}\label{systems}
Let $D>2$ be non-square. Consider three systems derived from the following three groups of polynomial identities for odd $D$'s:
\begin{equation}\label{system1}
\begin{cases}
kx\prod_{i=1}^{\frac{D-1}{2}}(x-\alpha_i)^2-\prod_{i=1}^{\frac{D-1}{2}}(x-\beta_i)^2=k(x-1)\prod_{i=1}^{\frac{D-1}{2}}(x-\gamma_i)^2\\
kx\prod_{i=1}^{\frac{D-1}{2}}(x-\alpha_i)^2-\lambda\prod_{i=1}^{\frac{D-1}{2}}(x-\beta_i)^2=k(x-\lambda)\prod_{i=1}^{\frac{D-1}{2}}(x-\delta_i)^2
\end{cases}
\end{equation}
\begin{equation}\label{system2}
\begin{cases}
kx\prod_{i=1}^{\frac{D-1}{2}}(x-\alpha_i)^2-\prod_{i=1}^{\frac{D-1}{2}}(x-\beta_i)^2=k(x-\lambda)\prod_{i=1}^{\frac{D-1}{2}}(x-\gamma_i)^2\\
kx\prod_{i=1}^{\frac{D-1}{2}}(x-\alpha_i)^2-\lambda\prod_{i=1}^{\frac{D-1}{2}}(x-\beta_i)^2=k(x-1)\prod_{i=1}^{\frac{D-1}{2}}(x-\delta_i)^2
\end{cases}
\end{equation}
\begin{equation}\label{system3}
\begin{cases}
k(x-\lambda)\prod_{i=1}^{\frac{D-1}{2}}(x-\alpha_i)^2-\prod_{i=1}^{\frac{D-1}{2}}(x-\beta_i)^2=kx\prod_{i=1}^{\frac{D-1}{2}}(x-\gamma_i)^2\\
k(x-\lambda)\prod_{i=1}^{\frac{D-1}{2}}(x-\alpha_i)^2-\lambda\prod_{i=1}^{\frac{D-1}{2}}(x-\beta_i)^2=k(x-1)\prod_{i=1}^{\frac{D-1}{2}}(x-\delta_i)^2
\end{cases}
\end{equation} 
and three systems derived from the following three group of polynomial identities for even $D$'s:  
\begin{equation}\label{system4}
\begin{cases}
k\prod_{i=1}^{\frac{D}{2}}(x-\alpha_i)^2-x(x-1)(x-\lambda)\prod_{i=1}^{\frac{D}{2}-2}(x-\beta_i)^2=k\prod_{i=1}^{\frac{D}{2}}(x-\gamma_i)^2\\
k\prod_{i=1}^{\frac{D}{2}}(x-\alpha_i)^2-\lambda x(x-1)(x-\lambda)\prod_{i=1}^{\frac{D}{2}-2}(x-\beta_i)^2=k\prod_{i=1}^{\frac{D}{2}}(x-\delta_i)^2
\end{cases}
\end{equation}
\begin{equation}\label{system5}
\begin{cases}
k(x-1)(x-\lambda)\prod_{i=1}^{\frac{D}{2}-1}(x-\alpha_i)^2-x\prod_{i=1}^{\frac{D}{2}-1}(x-\beta_i)^2=k\prod_{i=1}^{\frac{D}{2}}(x-\gamma_i)^2\\
k(x-1)(x-\lambda)\prod_{i=1}^{\frac{D}{2}-1}(x-\alpha_i)^2-\lambda x\prod_{i=1}^{\frac{D}{2}-1}(x-\beta_i)^2=k\prod_{i=1}^{\frac{D}{2}}(x-\delta_i)^2
\end{cases}
\end{equation}
\begin{equation}\label{system6}
\begin{cases}
kx(x-1)\prod_{i=1}^{\frac{D}{2}-1}(x-\alpha_i)^2-(x-\lambda)\prod_{i=1}^{\frac{D}{2}-1}(x-\beta_i)^2=k\prod_{i=1}^{\frac{D}{2}}(x-\gamma_i)^2\\
kx(x-1)\prod_{i=1}^{\frac{D}{2}-1}(x-\alpha_i)^2-\lambda(x-\lambda)\prod_{i=1}^{\frac{D}{2}-1}(x-\beta_i)^2=k\prod_{i=1}^{\frac{D}{2}}(x-\delta_i)^2
\end{cases}
\end{equation}
where each system has $2D$ equations and $2D$ unknowns, i.e. $k\neq 0$ along with $\lambda$ ,$\alpha_i$'s,$\beta_i$'s,$\gamma_i$'s, $\delta_i$'s, which are pairwise distinct numbers in $\Bbb{C}-\{0,1\}$. 
The values that $j(\lambda)=256\frac{(\lambda^2-\lambda+1)^3}{(\lambda^2-\lambda)^2}$ achieves for solutions to the three systems corresponding to $D$, 
are exactly the values of the modular function $j:\Bbb{H}\rightarrow\Bbb{C}$ at points of the upper half plane in the form of $\tau=\frac{1}{2b}\left(u+\sqrt{4D-a^2}{\rm{i}}\right)$ in which $u,a,b$ are integers with $\mid a\mid<2\sqrt{D}$ and $4b\mid u^2+4D-a^2$.
\end{thm}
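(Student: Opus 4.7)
My plan is to establish the theorem by combining the two directions: (a) showing that every $\tau$ as in Proposition \ref{classification} contributes a $j(\tau)$ appearing as $j(\lambda)$ for some solution of one of the six systems, and (b) conversely, that every such solution $(h,\lambda)$ produces an elliptic curve with a degree $D$ endomorphism, forcing the corresponding $\tau$ into the list of Proposition \ref{classification}. Propositions \ref{classification}, \ref{constraint} and Corollaries \ref{oddcase}, \ref{evencase} do most of the heavy lifting for both directions; what remains is essentially bookkeeping.

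For the forward direction, I would start from $\tau = \frac{1}{2b}(u+\sqrt{4D-a^2}\,{\rm i})$ with the stated integrality conditions, invoke Proposition \ref{classification} to produce a degree $D$ endomorphism $f : \mathcal{E} \to \mathcal{E}$ of $\mathcal{E} = \mathbb{C}/(\mathbb{Z}+\mathbb{Z}\tau)$ fixing the origin, pass to the Legendre model $y^2 = x(x-1)(x-\lambda)$, and use that $f$ fixes the point at infinity while commuting with the hyperelliptic involution to write $f(x,y) = (h(x), g(x)y)$. The induced $h$ on $\mathbb{CP}^1$ then fits into the commutative diagram $(\star)$ and satisfies Proposition \ref{constraint}. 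Applying the case analysis that leads to Corollaries \ref{oddcase} and \ref{evencase}, the ramification pattern of $h$ falls into exactly one of six configurations, and writing down the factored forms of $h(x)$, $h(x)-1$, $h(x)-\lambda$ dictated by that configuration and equating them (exactly as done in the paragraph preceding Theorem \ref{systems}) yields one of (\ref{system1})--(\ref{system6}) with $j(\tau) = j(\lambda)$.

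For the converse, given a solution $(k,\lambda,\alpha_i,\beta_i,\gamma_i,\delta_i)$ of any of the six systems, I would reconstruct $h(x) \in \mathbb{C}(x)$ from its factored form and verify by direct inspection of the system's identities that $h$ is degree $D$ and satisfies all bullets of Proposition \ref{constraint}. A direct algebraic check then shows the rational function $\frac{h(x)(h(x)-1)(h(x)-\lambda)}{x(x-1)(x-\lambda)}$ is a perfect square, producing $g(x) \in \mathbb{C}(x)$ so that $(x,y) \mapsto (h(x), g(x)y)$ is a well-defined morphism on $\mathcal{E} = \{y^2 = x(x-1)(x-\lambda)\}$. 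Counting ramifications through the diagram $(\star)$ via Riemann--Hurwitz, all prescribed contributions of $h$ to the ramification divisor of the composite maps $\mathcal{E} \to \mathbb{CP}^1$ are absorbed by the hyperelliptic projection, so the induced self-map of $\mathcal{E}$ is unramified, hence an isogeny of degree $D$. Proposition \ref{classification} then places the associated $\tau$ into the required form.

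The main obstacle will be the square-identity verification and the unramifiedness bookkeeping in the converse direction, carried out uniformly for the six configurations. The odd-$D$ systems (\ref{system1})--(\ref{system3}) are essentially symmetric and reduce to one model case via the $S_3$-symmetries on $\{0,1,\lambda\}$ discussed earlier; the even-$D$ systems (\ref{system4})--(\ref{system6}) require more care because the fiber structure above $\infty$ differs from that above $0,1,\lambda$. I would also need to confirm that no spurious solutions with coinciding critical points or vanishing $k$ slip in, which is handled by the explicit genericity conditions $k \neq 0$, $\lambda \neq 0,1$, and pairwise distinctness of $\alpha_i,\beta_i,\gamma_i,\delta_i$ built into the system. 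With that disposed of, the theorem follows from the two directions together.
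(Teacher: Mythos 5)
Your proposal is correct and follows essentially the same route as the paper: the forward direction is exactly the reduction $\tau \to f \to h$ via Proposition \ref{classification}, Proposition \ref{constraint} and the case analysis of Corollaries \ref{oddcase} and \ref{evencase}, and the converse is the paper's observation that for each of the six configurations $\frac{h(x)(h(x)-1)(h(x)-\lambda)}{x(x-1)(x-\lambda)}$ is a square in $\Bbb{C}(x)$, so $(x,y)\mapsto\big(h(x),g(x)y\big)$ is a degree $D$ self-isogeny and Proposition \ref{classification} applies (your Riemann--Hurwitz unramifiedness check is in fact automatic for any nonconstant map between genus-one curves). No gaps.
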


\begin{remark} \label{Belyi}
The above method resembles that of computing a Belyi function on the Riemann sphere from the 
knowledge of the corresponding dessin. 
But the functions $h(x)$ introduced in Proposition \ref{constraint} are not Belyi because they possess four critical values instead of three.
The difference  makes this family of functions more rigid than Belyi functions 
since the number of Belyi functions corresponding to a dessin on $\Bbb{CP}^1$ is finite only after rigidifying the dessin with fixing three of its vertices. Also in the Belyi case, functions have a model over $\bar{\Bbb{Q}}$ whereas in our case $h(x)$ is actually defined over algebraic numbers in the sense of $h(x)\in\bar{\Bbb{Q}}(x)$, cf. Theorem \ref{arithmeticity}.
\end{remark}

\begin{remark}\label{parity}
Sometimes it is possible, even without solving equations, to determine $\lambda$ corresponding to a $\tau=\frac{1}{2b}\left(u+\sqrt{4D-a^2}{\rm{i}}\right)$ (written in the form of Proposition \ref{classification} for a prescribed $D$) should be obtained as a solution to which of the three systems associated with $D$ in Theorem \ref{systems}. 
We should analysis how the corresponding degree $D$ isogeny of $\mathcal{E}=\frac{\Bbb{C}}{\Bbb{Z}+\Bbb{Z}\tau}$, introduced in Proposition \ref{classification} 
as $[z]\mapsto\left[\left(\frac{a-u}{2}+b\tau\right)z\right]$, 
permutes four critical points of the ramified 2-fold cover $\mathcal{E}\rightarrow\Bbb{CP}^1$ 
that appeared in columns of commutative diagram $(\star)$. 
This map is combination of the Weierstrass function $\wp$ of the lattice $\Bbb{Z}+\Bbb{Z}\tau$ with a linear map of $\Bbb{CP}^1$ which bijects the set 
$\big\{\wp(\frac{1}{2}),\wp(\frac{\tau}{2}),\wp(\frac{\tau+1}{2})\big\}$ onto $\{0,1,\lambda\}$. 
So its critical points are $[\frac{1}{2}], [\frac{\tau}{2}], [\frac{1+\tau}{2}]$ along with $[0]$, 
i.e. the identity element of our elliptic curve and also the unique point of $\mathcal{E}\rightarrow\Bbb{CP}^1$ above $\infty$. 
Thus we must study how multiplication by $\frac{a-u}{2}+b\tau$ permutes elements of the set $\{\frac{1}{2},\frac{\tau}{2},\frac{1+\tau}{2}\}$ mod the lattice $\Bbb{Z}+\Bbb{Z}\tau$. 
This might be recovered from the associated quadratic binary form $bx^2+uxy+\frac{u^2+4D-a^2}{4b}y^2$. 
In fact when $D$ is odd, systems \eqref{system1} and \eqref{system3} occur iff coefficients of this form are all even or all odd respectively. 
Otherwise we are dealing with the system \eqref{system2}. 
For even $D$'s, system  \eqref{system4} occurs exactly when all of the coefficients of this form are even.
\end{remark}
\begin{remark}\label{perfectsquare}
Although the proposed method does not seem to be applicable when $D$ is a perfect square, 
a careful analysis of the multiplication by $n\in\Bbb{Z}$  map 
on an elliptic curve $\mathcal{E}=\frac{\Bbb{C}}{\Bbb{Z}+\Bbb{Z}\tau}$, 
shows that even for such $D$'s any solution to one of our systems 
except systems \eqref{system1} and \eqref{system4} leads to a $j$-invariant computation. 
Obviously Proposition \ref{classification} remains true for a perfect square $D$ if we exclude the cases where $f:\mathcal{E}\rightarrow\mathcal{E}$ is the multiplication by $\pm\sqrt{D}$ map. 
To see the reason, consider the commutative diagram $(\star\star)$


\begin{equation*} \tag{$\star\star$}
\xymatrixcolsep{4pc}\xymatrix{\frac{\Bbb{C}}{\Bbb{Z}+\Bbb{Z}\tau}  \ar [r]^{P\mapsto\pm\sqrt{D}P } \ar[d]_\wp  & \frac{\Bbb{C}}{\Bbb{Z}+\Bbb{Z}\tau} \ar [d]_\wp\\ 
                 \Bbb{CP}^1 \ar[r]^h                                                                                                       &  \Bbb{CP}^1 }
\end{equation*}

\noindent whose columns are the Weierstrass elliptic function of lattice $\Bbb{Z}+\Bbb{Z}\tau$. 
The ramification structure of the induced map $h$ -which satisfies conditions in \ref{constraint} after being conjugated with some suitable linear map in order to $0,1,\infty$ be its critical values- 
is different from the ramification structure (outlined in 
Corollaries \ref{oddcase} , \ref{evencase}) of a degree $D$ meromorphic function on $\Bbb{CP}^1$ which solves one of systems \eqref{system2}, \eqref{system3}, \eqref{system5} or \eqref{system6}. 
This is due to the fact that  in the bottom row of this diagram 
critical values of $h:\Bbb{CP}^1\rightarrow\Bbb{CP}^1$  
must lie among those of Weierstrass function, i.e. $\wp(0)=\infty$, $\wp\big(\frac{1}{2}\big)$, $\wp\big(\frac{\tau}{2}\big)$ and $\wp\big(\frac{\tau+1}{2}\big)$, while these  are also simple points of $h$.  
So they constitute all four simple points in critical fibers of $h$. 
But by diagram chasing, $h$ fixes them when $D$ is odd (first case in Corollary \ref{oddcase} 
which corresponds to the system \eqref{system1}) and maps them all to $\infty$ when $D$ is even (first case in 
Corollary \ref{evencase} which corresponds to system \eqref{system4}).
Therefore any solution $\big(h(x),\lambda\big)$ to one of the remaining four systems, provides us with a degree $D$ isogeny $f:(x,y)\mapsto\big(h(x),yg(x)\big)$ of elliptic curve $y^2=x(x-1)(x-\lambda)$ 
(where $g(x)$ is a square root of $\frac{h(x)(h(x)-1)(h(x)-\lambda)}{x(x-1)(x-\lambda)}$ in 
the field $\Bbb{C}(x)$.) which differs from a multiplication map on this elliptic curve because  the ramification structure of its quotient under the hyperelliptic involution $y\mapsto -y$ (which is $h$) is different from that of a multiplication map. 
This guarantees that the elliptic curve $y^2=x(x-1)(x-\lambda)$ has complex multiplication and
the isogeny $f$ along with normalized periods $\tau\in\Bbb{H}$ can be written as in Proposition \ref{classification}.
\end{remark}

We will end this section with a Theorem concerning the nature of solutions of systems that appeared in Theorem \ref{systems}.

\begin{thm}\label{arithmeticity}
For a fixed non-square $D\in\Bbb{N}$, the number of pairs $\big(h(x),\lambda\big)$ consisting of a degree $D$ meromorphic function $h$ on the Riemann sphere and a complex number $\lambda\neq 0,1$ that obey constraints in Proposition \ref{constraint} is finite and all of them are defined over $\bar{\Bbb{Q}}$. Consequently, those three systems among  \eqref{system1} through \eqref{system6} which are associated with $D$ according to its parity have only finite number of solutions all of them lying in $\bar{\Bbb{Q}}$.  
\end{thm}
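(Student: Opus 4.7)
The plan is to first establish finiteness by reducing to Proposition \ref{finiteness}, and then to deduce arithmeticity from a Galois-equivariance argument in the spirit of Belyi's theorem. Each pair $\big(h(x),\lambda\big)$ obeying Proposition \ref{constraint} determines, via the commutative diagram $(\star)$, a degree $D$ isogeny $f:\mathcal{E}\rightarrow\mathcal{E}$ of the elliptic curve $\mathcal{E}=\{y^2=x(x-1)(x-\lambda)\}$ respecting the hyperelliptic involution and fixing the point at infinity (the factor $g(x)$ in $f:(x,y)\mapsto\big(h(x),g(x)y\big)$ is pinned down by $h$ up to a sign), and conversely any such triple $(\mathcal{E},\lambda,f)$ recovers $h$ as the quotient of $f$ by $y\mapsto -y$. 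By Proposition \ref{finiteness}, the set of pairs $(\mathcal{E},f)$ is finite up to isomorphism; each isomorphism class of $\mathcal{E}$ contributes at most six values of $\lambda$ (the $S_3$-orbit in $\Bbb{C}-\{0,1\}$ lying above a single $j$-value), and each fixed $\mathcal{E}$ admits only finitely many degree-$D$ endomorphisms because ${\rm{End}}(\mathcal{E})$ is an order in an imaginary quadratic field and hence contains finitely many integral elements of prescribed norm. Consequently the set of valid pairs is finite.

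Next I would exhibit an action of ${\rm{Aut}}(\Bbb{C}/\Bbb{Q})$ on this finite set. For $\sigma\in{\rm{Aut}}(\Bbb{C}/\Bbb{Q})$, let $\sigma\cdot h$ be the rational function obtained by applying $\sigma$ to the coefficients of the numerator and denominator of $h$ (in, say, monic-normalized form). Every clause of Proposition \ref{constraint} translates into a $\sigma$-invariant algebraic condition: the degree of $h$, the identity $h(\infty)=\infty$, the multiplicity-two requirement at critical points (encoded by the vanishing of appropriate discriminants of $h$, $h-1$ and $h-\lambda$), and the demand that critical values and images of $\{0,1,\lambda,\infty\}$ lie in $\{0,1,\lambda,\infty\}$. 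Since $\sigma$ fixes $0,1,\infty$ and moves $\lambda$ to $\sigma(\lambda)$, the pair $\big(\sigma\cdot h,\sigma(\lambda)\big)$ again satisfies Proposition \ref{constraint}, so ${\rm{Aut}}(\Bbb{C}/\Bbb{Q})$ permutes the valid pairs.

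Finiteness of the set then forces every orbit to be finite. For a fixed pair $(h,\lambda)$ the stabilizer has finite index in ${\rm{Aut}}(\Bbb{C}/\Bbb{Q})$ and fixes $\lambda$ together with every coefficient of the normalized representation of $h$. If any such scalar $c$ were transcendental over $\Bbb{Q}$, the translations $c\mapsto c+n$ for $n\in\Bbb{Z}$ would each extend (via a transcendence basis of $\Bbb{C}/\Bbb{Q}$ containing $c$, together with Zorn-type extension to the algebraic closure) to distinct elements of ${\rm{Aut}}(\Bbb{C}/\Bbb{Q})$, producing infinitely many distinct images of $(h,\lambda)$ and contradicting the finiteness of its orbit. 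Therefore $\lambda$ and every coefficient of $h$ lie in $\bar{\Bbb{Q}}$, i.e. $h\in\bar{\Bbb{Q}}(x)$ and $\lambda\in\bar{\Bbb{Q}}$. The same conclusion transfers immediately to the solutions of the systems \eqref{system1}--\eqref{system6}, which merely reformulate the conditions on $(h,\lambda)$ by spelling out the leading coefficient $k$ and the roots $\alpha_i,\beta_i,\gamma_i,\delta_i$---each of which is then algebraic since it is a root of a polynomial with coefficients in $\bar{\Bbb{Q}}$.

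The main obstacle will be making the Galois-action step completely rigorous: one must formalize each clause of Proposition \ref{constraint} as a polynomial identity in $\lambda$ and the coefficients of $h$ with coefficients in $\Bbb{Q}$, so that $\sigma$ genuinely permutes the solution scheme rather than merely acting at the level of abstract ramification data. Once this bookkeeping is done, the remainder of the argument is a routine Belyi-style descent.
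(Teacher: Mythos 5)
Your argument is correct, but it reaches the two conclusions in the opposite order from the paper and by different means. The paper first gets $\lambda\in\bar{\Bbb{Q}}$ from the theory of complex multiplication (the $j$-invariant of a CM curve is algebraic), then shows $h$ itself is defined over $\bar{\Bbb{Q}}$ by a rigidity argument: there are only finitely many isomorphism classes of degree-$D$ covers of $\Bbb{CP}^1$ with branch locus $\{0,1,\lambda,\infty\}$ (they are determined by monodromy), the Galois orbit of the branch locus is finite, and each isomorphism class contains at most $\binom{4}{3}^2\,|{\rm{S}}_3|$ admissible pairs, so the ${\rm{Gal}}(\Bbb{C}/\Bbb{Q})$-orbit of $\big(h,\lambda\big)$ is finite; finiteness of the solution set is then extracted at the very end from the fact that an algebraic subvariety of $\Bbb{C}^{2D}$ all of whose points are algebraic must be zero-dimensional. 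You instead prove finiteness first, by routing through Proposition \ref{finiteness} together with the observation that a quadratic order contains only finitely many elements of norm $D$ and that each $j$-value lies under at most six $\lambda$'s, and then deduce algebraicity in one stroke from the standard lemma that a finite ${\rm{Aut}}(\Bbb{C}/\Bbb{Q})$-orbit forces algebraicity of all coordinates. Your route is arguably more self-contained on the descent side (no appeal to monodromy rigidity or to the counting of pairs within an isomorphism class), at the cost of leaning on Proposition \ref{finiteness} --- which the paper deliberately avoids here, since it remarks afterwards that Theorem \ref{arithmeticity} gives an independent re-derivation of that proposition. One small point to make explicit: your opening claim that every pair satisfying Proposition \ref{constraint} lifts to an isogeny is not contained in Proposition \ref{constraint} itself (which records only necessary conditions); it requires the verification, carried out in the paper via Corollaries \ref{oddcase} and \ref{evencase} en route to Theorem \ref{systems}, that $\frac{h(x)(h(x)-1)(h(x)-\lambda)}{x(x-1)(x-\lambda)}$ is a perfect square in $\Bbb{C}(x)$ for each admissible ramification type. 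With that citation supplied, and the bookkeeping you yourself flag (normalizing $h=P/Q$ with, say, $Q$ monic and $P,Q$ coprime so that the Galois action on coefficients is well defined and injective on distinct coefficient vectors), the proof is complete.
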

\begin{proof}
{From Theorem \ref{systems}, for any such a pair $\big(h(x),\lambda\big)$ or equivalently in any solution to one of the systems, $j(\lambda)$ is $j$-invariant of an elliptic curve with complex multiplication. Thus $j(\lambda)$ and therefore $\lambda$ is algebraic. 
This implies that the  meromorphic function \hspace{2cm} $h:\Bbb{CP}^1\rightarrow\Bbb{CP}^1$ satisfying the properties specified in Proposition \ref{constraint} has a model over $\bar{\Bbb{Q}}$ as its orbit under ${\rm{Gal}}\big(\frac{\Bbb{C}}{\Bbb{Q}}\big)$ is finite up to isomorphism: 
Fixing critical values $0,1,\infty,\lambda$, the number of isomorphism classes of degree $D$ meromorphic functions on the Riemann sphere with only these critical values is finite (since they are determined by their monodromy representation). 
Also the orbits of these critical values under the action of ${\rm{Gal}}\big(\frac{\Bbb{C}}{\Bbb{Q}}\big)$ is finite because of $\lambda\in\bar{\Bbb{Q}}$. One simple observation implies 
that $h$ is actually defined over $\bar{\Bbb{Q}}$: fixing a $\big(h_1(x),\lambda_1\big)$, the number of pairs $\big(h_2(x),\lambda_2\big)$ where $h_1,h_2$ are isomorphic as meromorphic functions $\Bbb{CP}^1\rightarrow\Bbb{CP}^1$ does not exceed $\binom{4}{3}^2\mid{\rm{S}}_3\mid$:
there are M\"obius transformations $\beta$ and $\beta^{\prime}$ with $\beta^{\prime}\circ h_1=h_2\circ\beta$. Hence $\beta^{\prime}$ maps the critical values of $h_1$ to those of $h_2$ and $\beta$ takes four simple points over critical values of $h_1$ to similar points for $h_2$. 
We conclude that both of M\"obius transformations $\beta$ and $\beta^{\prime}$ map the set $\{0,1,\lambda_1,\infty\}$ onto $\{0,1,\lambda_2,\infty\}$ and therefore $\lambda_2$ belongs to the orbit of $\lambda_1$ under the action of ${\rm{S}}_3$ on $\Bbb{C}-\{0,1\}$. 
These conditions imply that the number of choices for $\lambda_2$,$\beta$,$\beta^{\prime}$ and thus $h_2$ is finite and in fact we get the desired bound since there are $\mid{\rm{S}}_3\mid$ choices for $\lambda_2$ having $\lambda_1$ in hand and each of these M\"obius transformations should map a prescribed four element set onto another specified four element set. 
So any of them has at most $\binom{4}{3}$ possibilities. 
This finishes the proof of our claim that for all pairs $\big(h(x),\lambda\big)$, which we were interested in,   $h(x)\in\Bbb{C}(x)$ is defined over $\bar{\Bbb{Q}}$. 
Consequently, all of its coefficients, roots and poles and hence solutions to any of the polynomial systems that appeared in Theorem \ref{systems} are algebraic. 
But the subset of solutions is a subvariety of the  $2D$-dimensional complex affine space and therefore forced to be finite. 
As a result, the number of solutions to any of these systems and thus number of pairs $\big(h(x),\lambda\big)$ described in \ref{constraint} is finite.}
\end{proof}
From the above finiteness theorem, finiteness of number of isomorphism classes of degree $D$ isogenies $f:\mathcal{E}\rightarrow\mathcal{E}$ between elliptic curves can also be inferred, the fact that was previously stated in Proposition \ref{finiteness} too.

\section{Examples}
In this section we mainly concentrate on examples to illustrate our method.
\begin{example}\label{D=3}
Let us apply the machinery developed so far by finishing the calculations for $D=3$. 
One obtains the values of $j$ at the  points $\tau$ of the upper half plane which represent all the 
${\rm{SL}}_2 (\Bbb{Z})$-orbits in Proposition \ref{classification} for $D=3$, 
owing to the fact that any imaginary quadratic number field of the form of $\Bbb{Q}\left(\sqrt{a^2-4\times 3}\right)$ has class number one.  They are
\begin{equation}\label{list1}
\begin{split}
 &\frac{1}{2}\sqrt{4\times 3}{\rm{i}}=\sqrt{3}{\rm{i}}\quad\frac{1}{2}\sqrt{4\times 3-1^2}{\rm{i}}=\frac{1+\sqrt{11}{\rm{i}}}{2}\quad\frac{1}{2}\sqrt{4\times 3-2^2}{\rm{i}}=\sqrt{2}{\rm{i}}\\
&\frac{1}{2}\sqrt{4\times 3-3^2}{\rm{i}}=\frac{1+\sqrt{3}{\rm{i}}}{2}
\end{split}
\end{equation}
Systems \eqref{system1},\eqref{system2},
 when $D=3$ turn out to be systems with six unknown and six equations each obtained from one of  the sets of two polynomial identities below:
$$\begin{cases}
kx(x-\alpha)^2-(x-\beta)^2=k(x-1)(x-\gamma)^2\\
kx(x-\alpha)^2-\lambda(x-\beta)^2=k(x-\lambda)(x-\delta)^2
\end{cases}
\begin{cases}
kx(x-\alpha)^2-(x-\beta)^2=k(x-\lambda)(x-\gamma)^2\\
kx(x-\alpha)^2-\lambda(x-\beta)^2=k(x-1)(x-\delta)^2
\end{cases}$$
First one is easier to solve which yields $\lambda={\rm{e}}^{\pm{\frac{{\rm{\pi i}}}{3}}}$ where  $j(\lambda)=256\frac{(\lambda^2-\lambda+1)^3}{(\lambda^2-\lambda)^2}$ vanishes. 
It is well-known that the value zero of $j$-invariant corresponds to the hexagonal lattice and therefore the period $\tau=\frac{1+\sqrt{3}{\rm{i}}}{2}$ in the list \eqref{list1} of $\tau$'s. 
Also we arrive at the following set of polynomial identities:
$$\begin{cases}
-\frac{1}{3}x\big(x-(1+{\rm{e}}^{\frac{{\rm{\pi i}}}{3}})\big)^2-\big(x-\frac{1+{\rm{e}}^{\frac{{\rm{\pi i}}}{3}}}{3}\big)^2=-\frac{1}{3}(x-1)(x-{\rm{e}}^{\frac{2{\rm{\pi i}}}{3}})^2\\
-\frac{1}{3}x\big(x-(1+{\rm{e}}^{\frac{{\rm{\pi i}}}{3}})\big)^2-{\rm{e}}^{\frac{{\rm{\pi i}}}{3}}\big(x-\frac{1+{\rm{e}}^{\frac{{\rm{\pi i}}}{3}}}{3}\big)^2=-\frac{1}{3}(x-{\rm{e}}^{\frac{{\rm{\pi i}}}{3}})(x+{\rm{e}}^{\frac{2{\rm{\pi i}}}{3}})^2
\end{cases}$$
where $\lambda={\rm{e}}^{\frac{{\rm{\pi i}}}{3}}$, and also its Galois conjugate below where $\lambda={\rm{e}}^{-\frac{{\rm{\pi i}}}{3}}$:
$$\begin{cases}
-\frac{1}{3}x\big(x-(1+{\rm{e}}^{-\frac{{\rm{\pi i}}}{3}})\big)^2-\big(x-\frac{1+{\rm{e}}^{-\frac{{\rm{\pi i}}}{3}}}{3}\big)^2=-\frac{1}{3}(x-1)(x-{\rm{e}}^{-\frac{2{\rm{\pi i}}}{3}})^2\\
-\frac{1}{3}x\big(x-(1+{\rm{e}}^{-\frac{{\rm{\pi i}}}{3}})\big)^2-{\rm{e}}^{-\frac{{\rm{\pi i}}}{3}}\big(x-\frac{1+{\rm{e}}^{-\frac{{\rm{\pi i}}}{3}}}{3}\big)^2=-\frac{1}{3}(x-{\rm{e}}^{-\frac{{\rm{\pi i}}}{3}})(x+{\rm{e}}^{-\frac{2{\rm{\pi i}}}{3}})^2
\end{cases}$$
Solutions for the second system are the following two sets of polynomial identities:
\begin{equation*}
\begin{split}
&\begin{cases}
-\frac{1}{3}\big(x-\frac{\lambda+1}{2}\big)^2-\big(x-\frac{2\lambda}{\lambda+1}\big)^2=-\frac{1}{3}(x-\lambda)(x+1)^2\\
-\frac{1}{3}\big(x-\frac{\lambda+1}{2}\big)^2-\lambda\big(x-\frac{2\lambda}{\lambda+1}\big)^2=-\frac{1}{3}(x-1)(x+\lambda)^2
\end{cases}\\
&\qquad\qquad\qquad\lambda=-7\pm 4\sqrt{3}
\end{split}
\end{equation*}
\begin{equation*}
\begin{split}
&\begin{cases}
\frac{\lambda(t+1)^2}{(\lambda-t)^2}x\big(x-\frac{1+t\lambda}{1+t}\big)^2-\big(x-\frac{\lambda (1-t)}{\lambda-t}\big)^2=\frac{\lambda(t+1)^2}{(\lambda-t)^2}(x-\lambda)(x-t)^2\\
\frac{\lambda(t+1)^2}{(\lambda-t)^2}x\big(x-\frac{1+t\lambda}{1+t}\big)^2-\lambda\big(x-\frac{\lambda (1-t)}{\lambda-t}\big)^2=\frac{\lambda(t+1)^2}{(\lambda-t)^2}(x-1)(x+t\lambda)^2
\end{cases}\\
&\qquad\qquad\qquad\qquad t\in\{\pm{\rm{i}}\}\quad
\lambda\in\{3\pm 2\sqrt{2}\}
\end{split}
\end{equation*}
that lead to   $j$ values $256\frac{(\lambda^2-\lambda+1)^3}{(\lambda^2-\lambda)^2}\mid_{-7\pm 4\sqrt{3}}=16\times 15^3=54000$ and $256\frac{(\lambda^2-\lambda+1)^3}{(\lambda^2-\lambda)^2}\mid_{3\pm 2\sqrt{2}}=20^3=8000$
\footnote{Here a simple observation was employed stating that the value of  $j(\lambda)=256\frac{(\lambda^2-\lambda+1)^3}{(\lambda^2-\lambda)^2}$   at roots of $\lambda^2+\mu\lambda+1=0$ equals $256\frac{(\mu+1)^3}{\mu+2}$. }.
The latter was obtained and numerically verified as $j\left(\sqrt{2}{\rm{i}}\right)$ (see Example \ref{D=2}). 
For the former, using $q$-expansion and estimating $j$-invariant at the point $\sqrt{3}{\rm{i}}$ of the upper half plane that appeared above in the list \eqref{list1}, we obtain
$$j\left(\sqrt{3}{\rm{i}}\right)\approx {\rm{e}}^{2\pi\sqrt{3}}+744+196884{\rm{e}}^{-2\pi\sqrt{3}}\approx 53999.992414$$
So we deduce that $j\left(\sqrt{3}{\rm{i}}\right)=16\times 15^3$.
Finally, system \eqref{system3} for $D=3$ yields
$$\begin{cases}
k(x-\lambda)(x-\alpha)^2-(x-\beta)^2=kx(x-\gamma)^2\\
k(x-\lambda)(x-\alpha)^2-\lambda(x-\beta)^2=k(x-1)(x-\delta)^2
\end{cases}$$
Its solutions are the following polynomial identities:
$$\begin{cases}
-\frac{16(t+1)^3}{(t+2)(3t+2)^3}(x-\frac{(t+2)^3(3t+2)}{16(t+1)^3})(x-\frac{(t+2)(3t+2)}{4(t+1)})^2-(x-\frac{(t+2)^2}{4(t+1)})^2\\
=-\frac{16(t+1)^3}{(t+2)(3t+2)^3}x(x-\frac{(t+2)(3t+2)}{4(t+1)^2})^2\\

\\
-\frac{16(t+1)^3}{(t+2)(3t+2)^3}(x-\frac{(t+2)^3(3t+2)}{16(t+1)^3})(x-\frac{(t+2)(3t+2)}{4(t+1)})^2-\frac{(t+2)^3(3t+2)}{16(t+1)^3}(x-\frac{(t+2)^2}{4(t+1)})^2\\
=-\frac{16(t+1)^3}{(t+2)(3t+2)^3}(x-1)(x+\frac{t(3t+4)}{4(t+1)^2}-1)^2
\end{cases}$$
where $\lambda=\frac{(t+2)^3(3t+2)}{16(t+1)^3}$ and $t$ is a root of $(3t+2)^4(t+2)^4-16t(t+1)^3(3t+4)^3$, a polynomial that factors as:
$$(3t^2+6t+4)\left[(3t^2+6t+4)^3-8t(3t^2+6t+4)^2-24t^2(3t^2+6t+4)-16t^3 \right] $$
So either $t=-1\pm\frac{\sqrt{3}{\rm{i}}}{3}$ where $\lambda=\frac{1\mp\sqrt{3}{\rm{i}}}{2}$ at which $j(\lambda)$ vanishes, i.e. we are in the case of the hexagonal elliptic curve $y^2=x^3-1$ ($\tau=\frac{1+\sqrt{3}{\rm{i}}}{2}$ 
from the list \eqref{list1}\footnote{It might seem strange that $\tau=\frac{1+\sqrt{3}{\rm{i}}}{2}$ occurs in solutions to systems of both kinds \eqref{system1} and \eqref{system3}. 
This is due to different ways of representing this number in the form described in Proposition \ref{classification}:  $\tau=\frac{1}{2\times 2}\left(2+\sqrt{4\times 3}{\rm{i}}\right)$ and $\tau= \frac{1}{2}\left(1+\sqrt{4\times 3-3^2}{\rm{i}}\right)$. 
The associated binary forms are $2x^2+2xy+2y^2$ and $x^2+xy+y^2$  respectively. 
Cf. Remark \ref{parity}.}), or $t$ satisfies the quadratic equation $3t^2+6t+4=ut$ where $u^3-8u^2-24u-16=0$.
In latter situation,  with the help of a numerical software, one observes that for any of six $t$'s mentioned there, at $\lambda=\frac{(t+2)^3(3t+2)}{16(t+1)^3}$ to six digits after decimal point  $j(\lambda)=256\frac{(\lambda^2-\lambda+1)^3}{(\lambda^2-\lambda)^2}$ coincides with $-32768$. 
The only value of $\tau$ left in the list \eqref{list1} is $\frac{1+\sqrt{11}{\rm{i}}}{2}$. 
It is well-known that $j\left(\frac{1+\sqrt{11}{\rm{i}}}{2}\right)$ equals $-32768=-2^{15}$ 
(for instance, check the table on page 383 of \cite{Cohen}). This fact is moreover confirmed by $q$-expansion:
$$ j\left(\frac{1+\sqrt{11}{\rm{i}}}{2}\right)\approx-{\rm{e}}^{\pi\sqrt{11}}+744-196884{\rm{e}}^{-\pi\sqrt{11}}+21493760{\rm{e}}^{-2\pi\sqrt{11}}\approx  -32767.999977$$
\end{example}

\begin{example}\label{D=4}
Next, consider the case of $D=4$. The quadratic imaginary number fields in the form of $\Bbb{Q}\left(\sqrt{a^2-4\times 4}\right)$ occurring in \ref{classification} 
when $D=4$ are $\Bbb{Q}\left(\sqrt{-1}\right),\Bbb{Q}\left(\sqrt{-15}\right),\Bbb{Q}\left(\sqrt{-3}\right)$ and $\Bbb{Q}\left(\sqrt{-7}\right)$ where $a=0,1,2,3$ respectively. 
All of them satisfy the unique factorization property except $\Bbb{Q}\left(\sqrt{-15}\right)$ whose class number is two. 
We shall determine a set of representatives for orbits of points $\tau=\frac{1}{2b}\left(u+\sqrt{16-a^2}{\rm{i}}\right)\,(4b\mid u^2+16-a^2)$ under the action of ${\rm{SL}_2(\Bbb{Z})}$. 
For $a=0,2$, fields $\Bbb{Q}\left(\sqrt{-1}\right),\Bbb{Q}\left(\rm{\sqrt{-3}}\right)$ both have class number one and the corresponding quadratic binary forms are of discriminant $a^2-4D=-16\,\text{or}\,-12$. 
In each case, associated binary quadratic forms $bx^2+uxy+\frac{u^2+16-a^2}{4b}y^2$ are equivalent under the action of ${\rm{SL}_2(\Bbb{Z})}$ provided the ${\rm{GCD}}$ of coefficients is fixed. 
Conditioning on ${\rm{GCD}}$, when $a=0$ there are quadratic forms of discriminant $-16$: $x^2+4y^2\,(u=0,b=1)$ associated with $\tau=2{\rm{i}}$ and $2(x^2+y^2)\,(u=0,b=2)$ 
associated with $\tau={\rm{i}}$ and the case when all coefficients of $bx^2+uxy+\frac{u^2+16}{4b}y^2$ are divisible by four is impossible. For $a=2$, again the class number is one. 
We are dealing with binary quadratic forms of discriminant $-12$. Hence their orbit under the action of ${\rm{SL}_2(\Bbb{Z})}$ is represented by $x^2+3y^2\,(u=0,b=1)$ for primitive forms along with $2(x^2+xy+y^2)\,(u=b=2)$ that are associated with $\tau=\sqrt{3}{\rm{i}}$ 
and $\tau=\frac{1+\sqrt{3}{\rm{i}}}{2}$. When $a=3$, class number of $\Bbb{Q}\left(\sqrt{-7}\right)$ is one and discriminant of the corresponding quadratic forms are $a^2-4D=-7$ which is square free. 
Thus all of them are primitive and there is only one orbit, for example that of $\tau=\frac{1+\sqrt{7}{\rm{i}}}{2}\,(u=1,b=1)$. Finally, suppose $a=1$. 
Class number of $\Bbb{Q}\left(\sqrt{-15}\right)$ is two and the associated forms are of discriminant $a^2-4D=-15$ and therefore primitive. 
Hence there are at most two orbits. 
Without much trouble, it can be verified that binary quadratic forms of discriminant $-15$, $x^2+xy+4y^2\,(u=b=1)$ associated with $\frac{1+\sqrt{15}{\rm{i}}}{2}$ and $2x^2+xy+2y^2\,(u=1,b=2)$ associated with $\tau=\frac{1+\sqrt{15}{\rm{i}}}{4}$
are not equivalent under the action of ${\rm{SL}_2(\Bbb{Z})}$.  This fact completes our set of representatives as:  
\begin{equation}\label{list2}
\rm{i},2\rm{i},\frac{1+\sqrt{3}{\rm{i}}}{2},\frac{1+\sqrt{7}{\rm{i}}}{2},\frac{1+\sqrt{15}{\rm{i}}}{2},\frac{1+\sqrt{15}{\rm{i}}}{4}
\end{equation}
When $D=4$,  \eqref{system5} and \eqref{system6} may be written as 
\begin{equation}\label{system7}
\begin{cases}
k(x-1)(x-\lambda)(x-\alpha)^2-x(x-\beta)^2=k(x^2+ux+v)^2\\
k(x-1)(x-\lambda)(x-\alpha)^2-\lambda x(x-\beta)^2=k(x^2+u^{\prime}x+v^{\prime})^2
\end{cases}
\end{equation}
\begin{equation}\label{system8}
\begin{cases}
kx(x-1)(x-\alpha)^2-(x-\lambda)(x-\beta)^2=k(x^2+ux+v)^2\\
kx(x-1)(x-\alpha)^2-\lambda(x-\lambda)(x-\beta)^2=k(x^2+u^{\prime}x+v^{\prime})^2
\end{cases}
\end{equation}
Equality of coefficients in them results in  systems of eight polynomial equations with eight unknowns including $\lambda$. 
According to Remark \ref{perfectsquare}, evaluating $j(\lambda)$ at any solution provides us with a value of the modular function $j$ at one of points from the upper half plane listed in \eqref{list2} though $D=4$ is a perfect square. The system \eqref{system7} is much easier to solve and yields:
\begin{equation*}
\begin{split}
&\begin{cases}
-\frac{1}{4}(x-1)(x-\beta^2)(x+\beta)^2-x(x-\beta)^2=-\frac{1}{4}\big(x^2+(4\beta+2)x+\beta^2\big)^2\\
-\frac{1}{4}(x-1)(x-\beta^2)(x+\beta)^2-\beta^2x(x-\beta)^2=-\frac{1}{4}\big(x^2-(8\beta+2)x+\beta^2\big)^2
\end{cases}\\
&\qquad\qquad\qquad\beta\in\{-3\pm 2\sqrt{2}\}\quad\lambda=\beta^2\in\{17\pm 12\sqrt{2}\}
\end{split}
\end{equation*}
and also:
\begin{equation*}
\begin{split}
&\begin{cases}
\frac{\beta^2-1}{4(u+2\beta)}(x-1)(x-\beta^2)(x+\beta)^2-x(x-\beta)^2=\frac{\beta^2-1}{4(u+2\beta)}\big(x^2+ux+\beta^2\big)^2\\
\frac{\beta^2-1}{4(u+2\beta)}(x-1)(x-\beta^2)(x+\beta)^2-\beta^2x(x-\beta)^2=\frac{\beta^2-1}{4(u+2\beta)}\big(x^2-(u+4\beta)x+\beta^2\big)^2
\end{cases}\\
&\qquad\beta\in\{\pm{\rm{i}}(2\pm \sqrt{3})\}\quad
\lambda=\beta^2\in\{-7\pm 4\sqrt{3}\}\quad
u=\beta+1-\frac{(\beta+\frac{1}{\beta})(\beta-1)}{2}
\end{split}
\end{equation*}
In first solution, $j(\lambda)=256\frac{(\lambda^2-\lambda+1)^3}{(\lambda^2-\lambda)^2}\mid_{17\pm 12\sqrt{2}}=66^3=287496$ while for $\tau=2{\rm{i}}$ in \eqref{list2}:
$$j(2i)\approx{\rm{e}}^{4\pi}+744+196884{\rm{e}}^{-4\pi}+21493760{\rm{e}}^{-8\pi}≈\approx287495.999999$$
In the second solution to system \eqref{system7} presented above, $\lambda=-7\pm 4\sqrt{3}$ 
which corresponds to $j\left(\sqrt{3}{\rm{i}}\right)=16\times 15^3$, 
just like what we saw before in Example  \ref{D=3}. 
In the case of system \eqref{system8}, 
after some rather cumbersome algebraic manipulations one obtains the following identities as all the solutions to \eqref{system8}: 
\begin{equation*}
\begin{split}
&\begin{cases}
&\frac{t(t^2+1)}{2}x(x-1)\big(x-\frac{(t+1)^2}{2(t^2+1)}\big)^2-(x-t^4)\big(x-\frac{(t+1)^2}{4t}\big)^2=\frac{t(t^2+1)}{2}\big(x^2-(1+\frac{1}{t})x+t^3\big)^2\\
&\frac{t(t^2+1)}{2}x(x-1)\big(x-\frac{(t+1)^2}{2(t^2+1)}\big)^2-t^4(x-t^4)\big(x-\frac{(t+1)^2}{4t}\big)^2=\frac{t(t^2+1)}{2}\big(x^2-(t+1)x+t^5\big)^2
\end{cases}\\
&t\in\left\{\frac{-1\pm\sqrt{7}{\rm{i}}}{4},\frac{\sqrt{5}-1}{8}\left(1\pm\left(2\sqrt{3}+
\sqrt{15}\right){\rm{i}}\right),\frac{\sqrt{5}+1}{8}\left(-1\pm\left(2\sqrt{3}-\sqrt{15}\right){\rm{i}}\right)\right\}\quad \lambda=t^4
\end{split}
\end{equation*}
Then we should evaluate $j(\lambda)=256\frac{(\lambda^2-\lambda+1)^3}{(\lambda^2-\lambda)^2}$ at $\lambda=t^4$ for aforementioned values of $t$. 
When $t=\frac{-1\pm\sqrt{7}{\rm{i}}}{4}$, we have $\lambda=t^4=\frac{1\pm 3\sqrt{7}{\rm{i}}}{32}$ at which $j$-invariant is $-15^3$, 
the value that was previously established as $j\left(\frac{1+\sqrt{7}{\rm{i}}}{2}\right)$ in Example \ref{D=2}. 
For other four values of $t$
\footnote{We should explain how these calculations were carried out. Numbers $t\neq 1$ appeared here are actually among the roots of $8t^5(t^2+1)=(t+1)^4$. 
Factorizing  the polynomial $8t^5(t^2+1)-(t+1)^4$ shows that either $\frac{t^2+t}{t^2+1}=-1$ where $t=\frac{-1+\sqrt{7}{\rm{i}}}{4}$ or  $\frac{t^2+t}{t^2+1}=2+\sqrt{5}$ 
where $t=\frac{\sqrt{5}-1}{8}\left(1\pm \left(2\sqrt{3}+\sqrt{15}\right){\rm{i}}\right)$ or $\frac{t^2+t}{t^2+1}=2-\sqrt{5}$ 
where $t=\frac{\sqrt{5}+1}{8}\left(-1\pm\left(2\sqrt{3}-\sqrt{15}\right){\rm{i}}\right)$. Using them, a quadratic equation which $\lambda =t^4$ satisfies is determined. 
Then a simple observation was employed stating that the value of  $j(\lambda)=256\frac{(\lambda^2-\lambda+1)^3}{(\lambda^2-\lambda)^2}$ 
at roots of $\lambda^2-\mu\lambda+\mu=0$ equals $256\frac{(\mu-1)^3}{\mu}$.}:
$$\begin{cases}
j(\lambda)\Big |_{\left(\frac{\sqrt{5}-1}{8}\left(1\pm\left(2\sqrt{3}+\sqrt{15}\right){\rm{i}}\right)\right)^4}=\frac{(7-3\sqrt{5})^2(-1+3\sqrt{5})^3(15+3\sqrt{5})^3}{256}\\
j(\lambda))\Big |_{\left(\frac{\sqrt{5}+1}{8}\left(-1\pm\left(2\sqrt{3}-\sqrt{15}\right){\rm{i}}\right)\right)^4}=\frac{(7+3\sqrt{5})^2(-1-3\sqrt{5})^3(15-3\sqrt{5})^3}{256}
\end{cases}$$
We will determine to which points of the upper half plane exhibited in \eqref{list2} these values correspond. First, using some numerical software:
$$\frac{(7-3\sqrt{5})^2 (-1+3\sqrt{5})^3 (15+3\sqrt{5})^3}{256}\approx632.832862$$
Let $\tau$ be $\frac{1+\sqrt{15}{\rm{i}}}{4}$ from our list. Then $j(\tau)$ should be real because $\mid\tau\mid=1$. 
On the other hand, $q=e^{2\pi{\rm{i}}\tau}$ is purely imaginary. 
So in the $q$-expansion only even powers of $q$ matters:
\begin{equation*}
\begin{split}
&j\left(\frac{1+\sqrt{15}{\rm{i}}}{4}\right)\approx744+21493760q^2+20245856256q^4\\
&=744-21493760{\rm{e}}^{-\pi\sqrt{15}}+20245856256e^{-2\pi\sqrt{15}}\approx632.833459
\end{split}
\end{equation*}
Therefore: $j\left(\frac{1+\sqrt{15}{\rm{i}}}{4}\right)=\frac{(7-3\sqrt{5})^2 (-1+3\sqrt{5})^3 (15+3\sqrt{5})^3}{256}$. 
Secondly, we have the estimate:
$$\frac{(7+3\sqrt{5})^2 (-1-3\sqrt{5})^3 (15-3\sqrt{5})^3}{256}\approx-191657.832862$$
while $q$-expansion gives:
$$ j\left(\frac{1+\sqrt{15}{\rm{i}}}{2}\right)\approx-{\rm{e}}^{\pi\sqrt{15}}+744-196884{\rm{e}}^{-\pi\sqrt{15}}+21493760{\rm{e}}^{-2\pi\sqrt{15}}\approx-191657.832862$$
which coincides with previous estimate to six digits after decimal point. 
Hence $j\left(\frac{1+\sqrt{15}{\rm{i}}}{2}\right)=\frac{(7+3\sqrt{5})^2 (-1-3\sqrt{5})^3 (15-3\sqrt{5})^3}{256}$.  
Both values of $j$-invariant we just derived belong to $\Bbb{Q}(\sqrt{5})\setminus \Bbb{Q}$. 
Thus we obtain the Hilbert class field of $\Bbb{Q}\left(\sqrt{-15}\right)=\Bbb{Q}\left(\frac{1+\sqrt{15}{\rm{i}}}{2}\right)$ as $\Bbb{Q}\left(\sqrt{-15},j\left(\frac{1+\sqrt{15}{\rm{i}}}{2}\right)\right)=\Bbb{Q}\left(\sqrt{-3},\sqrt{5}\right)$.
\end{example}

\section{Recovering $j(2\tau)$ and $j(3\tau)$ form $j(\tau)$}
Our main idea, i.e. reducing $j$-invariant calculations to studying certain classes of meromorphic functions on the Riemann sphere and eventually solving systems of polynomial equations, can also be used for computing $j$ values for degree $D$ isogenous elliptic curves $f:\mathcal{E}^\prime\rightarrow\mathcal{E}$. Again, $f$ induces a meromorphic function of same degree on $\Bbb{CP}^1$ fitting in a commutative diagram similar to $(\star)$.


\begin{equation*} \tag{$\star\star\star$}
\xymatrix{\mathcal{E}^{\prime}  \ar[r]^f \ar[d]  & \mathcal{E} \ar[d]\\
                 \Bbb{CP}^1 \ar[r]^h            &  \Bbb{CP}^1 }
\end{equation*}

Fixing $D$ and with prior knowledge of the Legendre form $y^2=x(x-1)(x-\lambda)$ for $\mathcal{E}$ 
and a normalized period $\tau\in\Bbb{H}$ of $\mathcal{E}$, 
the goal is to compute the $j$-invariant of $\mathcal{E}^{\prime}$. The   Legendre form of  $\mathcal{E}^{\prime}$ is 
assumed to be $y^2=x(x-1)(x-\lambda^{\prime})$. 
Normalized periods $\tau^{\prime}$ of  $\mathcal{E}^{\prime}$ are exactly images of $\tau$ under integral $2\times 2$ matrices of determinant $D$ in the action of ${\rm{GL}}_{2}^+(\Bbb{R})$ on 
the upper half plane. 
This set, i.e. $\big \{\frac{a\tau+b}{c\tau+d}\mid a,b,c,d\in\Bbb{Z}, ad-bc=D\big\}$, decomposes as union of finitely many ${\rm{SL}}_2(\Bbb{Z})$-orbits which once more provides us with a finite list of points (denoted by $\tau^{\prime}$) in the upper half plane whose corresponding Legendre form representation 
(that is, $\lambda^{\prime}$) is going to be investigated by studying certain degree $D$ meromorphic function $h:\Bbb{CP}^1\rightarrow\Bbb{CP}^1$. 
The commutative diagram $(\star\star\star)$ implies that $h$ obeys constraints similar to those in 
Proposition \ref{constraint} with a little modification that 
$\lambda$ in the domain should be replaced with $\lambda^{\prime}$:
\begin{itemize}
\item $h(\infty)=\infty$
\item $h$ has $2D-2$ critical points each with multiplicity $2$.
\item Critical values of $h$ belong to the set $\{0,1,\lambda,\infty\}$.
\item $h$ has simple points $0,1,\lambda^\prime,\infty$ whose value at any of them lies in $\{0,1,\lambda,\infty\}$.
\end{itemize}
The plan is to start with $D,\tau,\lambda$, form a finite list of ${\rm{SL}}_2(\Bbb{Z})$-orbits of $\tau^{\prime}$'s by exhibiting representatives, determine available $\lambda^{\prime}$'s -at which value of $j:\Bbb{C}-\{0,1\}\rightarrow\Bbb{C}$ may easily be calculated- by studying aforementioned functions $h$ and finally using $q$-expansion to find out a $\tau^{\prime}$ from our first list corresponds to which of the $j$-values just obtained.   

Let us apply this to $D=2$. Any integer $2\times 2$ matrix of determinant two may be written as an element of ${\rm{SL}}_2(\Bbb{Z})$ multiplied from right by one of matrices 
$\begin{bmatrix}
2&0\\
0&1
\end{bmatrix}$,
$\begin{bmatrix}
1&0\\
0&2
\end{bmatrix}$
or
$\begin{bmatrix}
1&1\\
-1&1
\end{bmatrix}$.
Consequently, having $\tau\in\Bbb{H}$ and a corresponding $\lambda\in\Bbb{C}-\{0,1\}$ in hand, we are going to derive values of $j:\Bbb{H}\rightarrow\Bbb{C}$ at points $2\tau,\frac{\tau}{2},\frac{\tau+1}{-\tau+1}$ from its value at $\tau$. 
The degree two map $h:\Bbb{CP}^1\rightarrow\Bbb{CP}^1$ has two critical points each of multiplicity two. 
So from conditions mentioned above, its critical values are two elements in $\{0,1,\lambda\}$ 
and furthermore the fiber over the remaining point consists of two points in $\{0,1,\lambda^{\prime}\}$. The point left in this set is mapped to $\infty$. 
It must be mentioned that these conditions on $h(x)$ are also sufficient in the sense that for 
any such a $h(x)$ ($\lambda^{\prime}$ its unique critical point outside of $\{0,1,\infty\}$) 
fits in bottom of a diagram like $(\star\star\star)$ 
whose top row is a degree two homomorphism $f$ from  
$\mathcal{E}^{\prime}=\left\{y^2=x(x-1)(x-\lambda^{\prime})\right\}$ to 
$\mathcal{E}=\left\{y^2=x(x-1)(x-\lambda)\right\}$ and left and right columns are the ramified 2-fold covers $(x,y)\mapsto x$. 
This is due to the fact that these conditions force $\frac{h(x)(h(x)-1)(h(x)-\lambda)}{x(x-1)(x-\lambda^{\prime})}\in\Bbb{C}(x)$ to have a square root, say $g(x)$. 
Then $f:(x,y)\mapsto\big(h(x),yg(x)\big)$ is a well-defined map making 
the diagram commutative.  
Substituting $\lambda$ and $\lambda^{\prime}$ with other points in their ${\rm{S}_3}$-orbit via combining  $h$ from left with one of M\"obius transformations 
$x\mapsto 1-x\,\text{or}\,1-\frac{x}{\lambda}$ and from right with one of M\"obius transformations $x\mapsto{\lambda^{\prime}}x\,\text{or}\,\lambda^{\prime}(1-x)$,
without any loss of generality we may concentrate only on the case  where under $h$: $\lambda^{\prime},\infty\mapsto \infty$, $0,1\mapsto 0$ and $1,\lambda$ are critical values. 
Hence  $h(x)=\frac{kx(x-1)}{x-\lambda^{\prime}}$ and since $1,\lambda$ are critical values, discriminant of quadratic polynomials $kx(x-1)-(x-\lambda^{\prime})$ and $kx(x-1)-\lambda(x-\lambda^{\prime})$ must vanish. 
This leads to a very simple system of equations with unknowns $k,\lambda^{\prime}$ whose solutions are $k=\sqrt{\lambda},\lambda^{\prime}=\frac{1}{4}\left(\sqrt{\lambda}+\frac{1}{\sqrt{\lambda}}+2\right)$. 
Thus $j$-invariant of $\mathcal{E}^{\prime}$ is in the form of:
$$256\frac{\left[\left(\frac{1}{4}\left(\sqrt{\lambda}+\frac{1}{\sqrt{\lambda}}+2\right)\right)^2-\frac{1}{4}\left(\sqrt{\lambda}+\frac{1}{\sqrt{\lambda}}+2\right)+1\right]^3}{\left[\left(\frac{1}{4}\left(\sqrt{\lambda}+\frac{1}{\sqrt{\lambda}}+2\right)\right)^2-\frac{1}{4}\left(\sqrt{\lambda}+\frac{1}{\sqrt{\lambda}}+2\right)\right]^2}=16\frac{\left(\lambda+\frac{1}{\lambda}+14\right)^3}{\left(\lambda+\frac{1}{\lambda}-2\right)^2} $$
where $\lambda$ varies in its ${\rm{S}}_3$-orbit $\left\{\lambda,1-\lambda,\frac{1}{\lambda},\frac{1}{1-\lambda},\frac{\lambda-1}{\lambda},\frac{\lambda}{\lambda-1}\right\}$. 
As a result, the following Theorem is inferred:

\begin{thm} \label{2tau}
Let values of the modular function $j:\Bbb{H}\rightarrow\Bbb{C}$ and the function
$\begin{cases}
j:\Bbb{C}-\{0,1\}\rightarrow\Bbb{C}\\
\lambda\mapsto 256\frac{(\lambda^2-\lambda+1)^3}{(\lambda^2-\lambda)^2}
\end{cases}$ 
coincide at points $\tau$ and $\lambda$ respectively. Then:
$$\left\{j\left(2\tau\right),j\left(\frac{\tau}{2}\right),j\left(\frac{\tau+1}{-\tau+1}\right)\right\}=\left\{16\frac{\left(u+\frac{1}{u}+14\right)^3}{\left(u+\frac{1}{u}-2\right)^2}\Bigg | u\in\left\{\lambda,1-\lambda,\frac{\lambda-1}{\lambda}\right\}\right\} $$
\end{thm}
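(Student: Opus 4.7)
The plan is to specialize the framework of diagram $(\star\star\star)$ to $D = 2$. First I would decompose the set of integer $2\times 2$ matrices of determinant $2$ into $\mathrm{SL}_2(\mathbb{Z})$-orbits via the standard right coset representatives $\begin{bmatrix}2&0\\0&1\end{bmatrix}$, $\begin{bmatrix}1&0\\0&2\end{bmatrix}$, $\begin{bmatrix}1&1\\-1&1\end{bmatrix}$ (exactly as noted in the paragraph above the theorem). Their action on $\tau$ produces the three points $2\tau$, $\tau/2$, and $(\tau+1)/(-\tau+1)$, giving a complete list of representatives of the $\mathrm{SL}_2(\mathbb{Z})$-orbits of normalized periods $\tau'$ of degree-$2$-isogenous elliptic curves $\mathcal{E}'$.

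Next I would classify the degree-$2$ meromorphic function $h:\mathbb{CP}^1\rightarrow\mathbb{CP}^1$ fitting in $(\star\star\star)$. By Riemann--Hurwitz, $h$ has exactly two critical points, each of multiplicity two, so precisely two elements of $\{0,1,\lambda\}$ are critical values; the fiber over the third must consist of two points of $\{0,1,\lambda'\}$, the remaining one together with $\infty$ making up $h^{-1}(\infty)$. Using the $S_3$ symmetries that simultaneously permute $\{0,1,\lambda\}$ in the codomain and $\{0,1,\lambda'\}$ in the domain, I would reduce without loss of generality to the normal form sending $0,1\mapsto 0$ and $\lambda',\infty\mapsto\infty$ with critical values $1$ and $\lambda$. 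This forces $h(x)=kx(x-1)/(x-\lambda')$.

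The critical-value conditions translate into the requirement that both quadratics $kx(x-1)-(x-\lambda')$ and $kx(x-1)-\lambda(x-\lambda')$ have vanishing discriminant, yielding a tiny polynomial system in $(k,\lambda')$. Solving gives $k=\sqrt{\lambda}$ and $\lambda'=\tfrac{1}{4}\!\left(\sqrt{\lambda}+1/\sqrt{\lambda}+2\right)$. Writing $t=\sqrt{\lambda}$ so $\lambda'=(t+1)^2/(4t)$, a direct computation yields ${\lambda'}^2-\lambda'+1=(t^4+14t^2+1)/(16t^2)$ and $\lambda'(\lambda'-1)=(t^2-1)^2/(16t^2)$; substituting into the Legendre formula and observing that both numerator and denominator become symmetric in $t\leftrightarrow 1/t$, the square roots disappear and one obtains the clean expression $j(\lambda')=16(\lambda+1/\lambda+14)^3/(\lambda+1/\lambda-2)^2$.

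Finally, the only asymmetry in the argument was the choice of which two members of $\{0,1,\lambda\}$ were taken as critical values; the remaining two cases are obtained by applying the $S_3$-action on $\lambda$, and since the displayed formula depends only on $u+1/u$ and is therefore invariant under $u\mapsto 1/u$, the six-element $S_3$-orbit $\{\lambda,1-\lambda,1/\lambda,1/(1-\lambda),(\lambda-1)/\lambda,\lambda/(\lambda-1)\}$ collapses to the three values indexed by $u\in\{\lambda,1-\lambda,(\lambda-1)/\lambda\}$. These three $j(\lambda')$ therefore match, in some order, the three values $j(2\tau), j(\tau/2), j((\tau+1)/(-\tau+1))$, which proves the set equality. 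The only substantive obstacle is the algebraic simplification leading to the symmetric form in $\lambda+1/\lambda$; its cleanness is guaranteed a priori by the $S_3$-invariance of $j$, but the explicit cancellation must still be carried out.
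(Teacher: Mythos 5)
Your proposal is correct and follows essentially the same route as the paper: the same coset decomposition of determinant-two matrices, the same reduction of $h$ to the normal form $kx(x-1)/(x-\lambda^{\prime})$ via the $S_3$ symmetries, the same vanishing-discriminant system yielding $k=\sqrt{\lambda}$ and $\lambda^{\prime}=\frac{1}{4}\left(\sqrt{\lambda}+\frac{1}{\sqrt{\lambda}}+2\right)$, and the same observation that invariance under $u\mapsto\frac{1}{u}$ collapses the six-element orbit to three values. The explicit simplification you carry out with $t=\sqrt{\lambda}$ is correct and matches the displayed formula.
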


\begin{example}\label{ex2tau}
To illustrate power of this Theorem, consider some 
$j$-invariant values derived in previous examples. 
For $\tau=\sqrt{2}{\rm{i}},\lambda=3+2\sqrt{2}$ is a corresponding $\lambda$ according to Examples \ref{D=2} and \ref{D=3}. 
Evaluating  the expression $16\frac{\big(u+\frac{1}{u}+14\big)^3}{\big(u+\frac{1}{u}-2\big)^2}$ at $u=\frac{\lambda-1}{\lambda}=2\sqrt{2}-2$ 
yields \hspace{1.5cm}$10^3\left(5+\sqrt{2}\right)^{3}\left(7+5\sqrt{2}\right)^2\approx 52249767.137718$ while $q$-expansion approximates $j\left(2\sqrt{2}{\rm{i}}\right)$ as
$$j\left(2\sqrt{2}{\rm{i}}\right)\approx {\rm{e}}^{4\pi\sqrt{2}}+744+196884{\rm{e}}^{-4\pi\sqrt{2}}\approx 52249767.137718$$
that coincides with $10^3\left(5+\sqrt{2}\right)^{3}\left(7+5\sqrt{2}\right)^2$ to six digits after decimal point. 
We deduce that $j\left(2\sqrt{2}{\rm{i}}\right)=10^3\left(5+\sqrt{2}\right)^{3}\left(7+5\sqrt{2}\right)^2$ and moreover $\tau^{\prime}=2\sqrt{2}{\rm{i}}$ corresponds to 
$$\lambda^{\prime}=\frac{1}{4}\left(\sqrt{\frac{\lambda-1}{\lambda}}+\sqrt{\frac{\lambda}{\lambda-1}}+2\right)\Big |_{\lambda=3+2\sqrt{2}}=\frac{1}{4}\left(\sqrt{2\sqrt{2}-2}+\frac{\sqrt{2\sqrt{2}+2}}{2}+2\right)$$
 and this method can be iterated another time to find $j\left(4\sqrt{2}{\rm{i}}\right)$ and so on. 
Note that here one other choices for $u$ in Theorem \ref{2tau} is $\lambda=3+2\sqrt{2}$ 
where  $16\frac{\big(u+\frac{1}{u}+14\big)^3}{\big(u+\frac{1}{u}-2\big)^2}$  
equals $20^3=j\left(\sqrt{2}{\rm{i}}\right)=j\left(\frac{\sqrt{2}{\rm{i}}}{2}\right)=j\left(\frac{\tau}{2}\right)$. 
So the only value of  $16\frac{\big(u+\frac{1}{u}+14\big)^3}{\big(u+\frac{1}{u}-2\big)^2}$ left, 
i.e. its value at $u=1-\lambda=-2\sqrt{2}-2$ can be the 
value of the modular function at $\frac{\tau+1}{-\tau+1}=\frac{-1+2\sqrt{2}{\rm{i}}}{3}$. 
Hence: $j\left(\frac{-1+2\sqrt{2}{\rm{i}}}{3}\right)= 10^3\left(5-\sqrt{2}\right)^3\left(7-5\sqrt{2}\right)^2$. 
Before finishing this example, we will quickly mention few other results of this type 
where we calculate the value of the $j$-function at point $2\tau$ of the 
upper half plane using values $j(\tau)$ currently available to us from previous examples. 
Consider pairs $(\tau,\lambda)=\left(\frac{1+\sqrt{7}{\rm{i}}}{2},\frac{1+ 3\sqrt{7}{\rm{i}}}{2}\right),\left(\sqrt{3}{\rm{i}},-7- 4\sqrt{3}\right),\left(2{\rm{i}},17+12\sqrt{2}\right)$  that
appeared respectively in Examples \ref{D=2}, \ref{D=3} and \ref{D=4}. 
Setting $u=\frac{\lambda-1}{\lambda}$, and computing $16\frac{\big(u+\frac{1}{u}+14\big)^3}{\big(u+\frac{1}{u}-2\big)^2}$ in these cases, we have:
\begin{equation*}
\begin{cases}
16\frac{\big(u+\frac{1}{u}+14\big)^3}{\big(u+\frac{1}{u}-2\big)^2}\Big |_{\frac{31+ 3\sqrt{7}{\rm{i}}}{32}}=255^3=16581375\\
16\frac{\big(u+\frac{1}{u}+14\big)^3}{\big(u+\frac{1}{u}-2\big)^2})\Big |_{8-4\sqrt{3}}=4\times 15^3\left(6-\sqrt{3}\right)^3\left(26+15\sqrt{3}\right)^2\approx 2835807690.422285\\
16\frac{\big(u+\frac{1}{u}+14\big)^3}{\big(u+\frac{1}{u}-2\big)^2})\Big |_{12\sqrt{2}-16}=\frac{1}{2}\left(99\sqrt{2}-12\right)^3\left(99\sqrt{2}+140\right)^2\approx 82226316329.59503
\end{cases}
\end{equation*}
which by Theorem \ref{2tau} are candidates for $j\left(1+\sqrt{7}{\rm{i}}\right)=j\left(\sqrt{7}{\rm{i}}\right),j\left(2\sqrt{3}{\rm{i}}\right),j\left(4{\rm{i}}\right)$ in same order. 
The $q$-expansion assures us that they are correct choices:
\begin{equation*}
\begin{split}
&j\left(\sqrt{7}{\rm{i}}\right)\approx {\rm{e}}^{2{\rm{\pi}}\sqrt{7}}+744+196884{\rm{e}}^{-2{\rm{\pi}}\sqrt{7}}\approx 16581374.999999\\
&j\left(2\sqrt{3}{\rm{i}}\right)\approx {\rm{e}}^{4{\rm{\pi}}\sqrt{3}}+744+196884{\rm{e}}^{-4{\rm{\pi}}\sqrt{3}}\approx 2835807690.422278\\
&j(4{\rm{i}})\approx {\rm{e}}^{8{\rm{\pi}}}+744+196884{\rm{e}}^{-8{\rm{\pi}}}\approx 82226316329.59491
\end{split}
\end{equation*}
\end{example}

The same procedure is applicable when the degree $D$ of rows in $(\star\star\star)$ is three although 
the calculations are more tedious. 
First, note that any $2\times 2$ integer matrix of determinant three has a description as a product of an element of ${\rm{SL}}_2(\Bbb{Z})$ by one of matrices 
$\begin{bmatrix}
3&0\\
0&1
\end{bmatrix}$,
$\begin{bmatrix}
1&0\\
0&3
\end{bmatrix}$,
$\begin{bmatrix}
1&2\\
-1&1
\end{bmatrix}$
or
$\begin{bmatrix}
1&-2\\
1&1
\end{bmatrix}$.
So we hope to recover some numbers among $j(3\tau),j\left(\frac{\tau}{3}\right),j\left(\frac{\tau+2}{-\tau+1}\right),j\left(\frac{\tau-2}{\tau+1}\right)$ 
with the assumption that $\tau$ is specified along with  $j(\tau)$   in the form of $j(\lambda)=256\frac{(\lambda^2-\lambda+1)^3}{(\lambda^2-\lambda)^2}$. 
Secondly, we are going to accomplish this via studying certain types of  degree three meromorphic functions. These meromorphic functions $h:\Bbb{CP}^1\rightarrow\Bbb{CP}^1$ which we are interested in, 
by same argument as the one resulted in Corollary \ref{oddcase}, 
should satisfy conditions similar to those in Corollary \ref{oddcase} for $D=3$ 
with the exception that $\lambda$ in the domain must be replaced with $\lambda^{\prime}$. 
Again, by composing with suitable M\"obius maps, without changing the ${\rm{S}}_3$-orbits of $\lambda$ or $\lambda^{\prime}$, 
we may assume that the first case in Corollary \ref{oddcase} occurs.  
Therefore critical values of $h$ are exactly $0,1,\lambda,\infty$, over each $h$ has one critical point of multiplicity two  
and one simple point which are $0,1,\lambda^{\prime},\infty$ respectively. 
The constraints just mentioned are also sufficient, that is, any such a function $h$ results in a degree three homomorphism of elliptic curves making $(\star\star\star)$ commutative. 
Hence in order to get the value of $j$-invariant at one of the aforementioned four points of the
upper half plane associated with $\tau$, we will try to exhibit such a function in terms of $\lambda$. 
Since $h(0)=0$ and $h(\infty)=\infty$ and $0,\infty$ are critical values, 
$h(x)=\frac{xP(x)}{Q(x)}$ where $P(x)$ and $Q(x)$ are quadratic polynomials of discriminant zero. $h(1)=1,h(\lambda^{\prime})=\lambda$ yield $Q(1)=P(1), Q(\lambda^{\prime})=\frac{\lambda^{\prime}}{\lambda}P(\lambda^{\prime})$. 
So writing them in the basis $\left\{(x-1)^2,(x-1)(x-\lambda^{\prime}),(x-\lambda^{\prime})^2\right\}$ for the space of polynomials of degree less than three, 
we conclude that for some suitable $t\in\Bbb{C}-\{0,-1\}$:
\begin{equation} \label{prototype}
h(x)=\frac{x\big((x-1)^2+2t(x-1)(x-\lambda^{\prime})+t^2(x-\lambda^{\prime})^2\big)}{\frac{\lambda^{\prime}}{\lambda}(x-1)^2+2t\sqrt{\frac{\lambda^{\prime}}{\lambda}}(x-1)(x-\lambda^{\prime})+t^2(x-\lambda^{\prime})^2} 
\end{equation}
The only condition left is that $1,\lambda$ are critical values of $h(x)$, i.e. numerators of $h(x)-1$ and $h(x)-\lambda$ have multiple roots.  The polynomials that may be written as:
$$\begin{cases}
(x-1)\left[(x-1)(x-\frac{\lambda^{\prime}}{\lambda})+2t\left(x-\sqrt{\frac{\lambda^{\prime}}{\lambda}}\right)(x-\lambda^{\prime})+t^2(x-\lambda^{\prime})^2)\right]\\
(x-\lambda^{\prime})\left[(x-1)^2+2t\left(x-\sqrt{\lambda\lambda^{\prime}}\right)(x-1)+t^2(x-\lambda)(x-\lambda^{\prime})\right]
\end{cases}$$
The vanishing of discriminants of quadratic polynomials that appeared in brackets yields a system with unknowns $t,\lambda^{\prime}$:
$$\begin{cases}
\left(1+\frac{\lambda^{\prime}}{\lambda}+2t\left(\sqrt{\frac{\lambda^{\prime}}{\lambda}}+\lambda^{\prime}\right)+2t^2\lambda^{\prime}\right)^2=4(t+1)^2\left(\sqrt\frac{\lambda^{\prime}}{\lambda}+t\lambda^{\prime}\right)^2\\
\left(2+2t\left(\sqrt{\lambda\lambda^{\prime}}+1\right)+t^2(\lambda+\lambda^{\prime})\right)^2=4(t+1)^2\left(1+t\sqrt{\lambda\lambda^{\prime}}\right)^2
\end{cases}$$
We are looking for solutions in terms of $\lambda$. 
Assuming $\lambda\neq\lambda^{\prime}$, second equation gives us the identity $4+4t\left(\sqrt{\lambda\lambda^{\prime}}+1\right)+t^2(\sqrt{\lambda}+\sqrt{\lambda^{\prime}})^2=0$ 
while with dividing first equation by second one, we have:
$$\frac{1+\frac{\lambda^{\prime}}{\lambda}+2t\left(\sqrt{\frac{\lambda^{\prime}}{\lambda}}+\lambda^{\prime}\right)+2t^2\lambda^{\prime}}{2+2t\left(\sqrt{\lambda\lambda^{\prime}}+1\right)+t^2(\lambda+\lambda^{\prime})}=\pm\frac{\sqrt{\lambda^{\prime}}}{\sqrt{\lambda}}.$$
Choosing the plus sign, this identity reduces to $t^2=\frac{1}{\sqrt{\lambda\lambda^{\prime}}}$. 
Combining it with  $4+4t\left(\sqrt{\lambda\lambda^{\prime}}+1\right)+t^2\left(\sqrt{\lambda}+\sqrt{\lambda^{\prime}}\right)^2=0$ appeared before gives us:
$$\left(\lambda+\lambda^{\prime}+6\sqrt{\lambda\lambda^{\prime}}\right)^2=16\left(1+\sqrt{\lambda\lambda^{\prime}}\right)^2\sqrt{\lambda\lambda^{\prime}}$$
Hence fixing $\lambda$, for any $\lambda^{\prime}$ satisfying the above equation and $t$ given by $t^2=\frac{1}{\sqrt{\lambda\lambda^{\prime}}}$, $h(x)$ in \ref{prototype} will be a  degree three meromorphic function 
on $\Bbb{CP}^1$ with desired properties.  
This is reflected in our last Theorem: 
\begin{thm}\label{3tau}
Let $\tau\in\Bbb{H}$ and $\lambda\in\Bbb{C}-\{0,1\}$ be such that $j(\tau)=256\frac{(\lambda^2-\lambda+1)^3}{(\lambda^2-\lambda)^2}$. If  $\lambda^{\prime}\in\Bbb{C}-\{0,1\}$  satisfies $\left(\lambda+\lambda^{\prime}+6\sqrt{\lambda\lambda^{\prime}}\right)^2=16\left(1+\sqrt{\lambda\lambda^{\prime}}\right)^2\sqrt{\lambda\lambda^{\prime}}$, then  $256\frac{({\lambda^{\prime}}^2-\lambda^{\prime}+1)^3}{({\lambda^{\prime}}^2-\lambda^{\prime})^2}$ belongs to the set $\left\{j(3\tau),j\left(\frac{\tau}{3}\right),j\left(\frac{\tau+2}{-\tau+1}\right),j\left(\frac{\tau-2}{\tau+1}\right)\right\}$.
\end{thm}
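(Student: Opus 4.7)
My plan is to exploit the geometric setup of the commutative diagram $(\star\star\star)$: turn the claim into a statement about degree-three rational maps $h:\Bbb{CP}^1\to\Bbb{CP}^1$, parametrize these maps explicitly, and then read off the polynomial relation in $\lambda,\lambda'$ by equating coefficients. First I would dispose of the combinatorial side: every integer $2\times 2$ matrix of determinant three is the product of an element of $\mathrm{SL}_2(\Bbb{Z})$ with one of the four matrices listed just before the theorem, so every normalized period $\tau'$ of an elliptic curve three-isogenous to $\mathcal{E}=\{y^2=x(x-1)(x-\lambda)\}$ lies in the $\mathrm{SL}_2(\Bbb{Z})$-orbit of $3\tau,\,\tau/3,\,(\tau+2)/(-\tau+1)$, or $(\tau-2)/(\tau+1)$. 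Hence it suffices to show that if $\mathcal{E}'=\{y^2=x(x-1)(x-\lambda')\}$ admits a degree-three isogeny onto $\mathcal{E}$, then $\lambda$ and $\lambda'$ obey the stated equation.

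Such an isogeny has the form $f(x,y)=(h(x),\,g(x)y)$ with $h\in\Bbb{C}(x)$ of degree three, because it respects the hyperelliptic involutions and fixes the point at infinity. Running the Riemann--Hurwitz argument used to prove Corollary \ref{oddcase}, but with $\lambda$ in the domain replaced by $\lambda'$, I find that $h$ has precisely $\{0,1,\lambda,\infty\}$ as critical values, with one simple point and one double critical point over each, and moreover $h$ maps $\{0,1,\lambda',\infty\}$ into $\{0,1,\lambda,\infty\}$. The $S_3$-symmetries coming from M\"obius transformations of source and target (acting on $\{0,1,\lambda'\}$ and $\{0,1,\lambda\}$ respectively) permute the admissible boundary conditions without altering $j(\lambda)$ or $j(\lambda')$, so I may normalize $h(0)=0$, $h(1)=1$, $h(\lambda')=\lambda$, $h(\infty)=\infty$.

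The parametrization step is then forced: since $0$ and $\infty$ are critical values covered by a simple point and a double critical point, $h(x)=xP(x)/Q(x)$ with $P,Q$ quadratics each of zero discriminant, and the boundary conditions $h(1)=1$, $h(\lambda')=\lambda$ allow me to write both $P$ and $Q$ in the basis $\{(x-1)^2,(x-1)(x-\lambda'),(x-\lambda')^2\}$, yielding the one-parameter family (\ref{prototype}) in a scalar $t\neq 0,-1$. The remaining requirement, that $1$ and $\lambda$ be critical values, is the vanishing of the discriminants of the two bracketed quadratics appearing in the numerators of $h(x)-1$ and $h(x)-\lambda$. This produces two polynomial equations in $(t,\lambda')$ with $\lambda$ as a parameter; assuming $\lambda\neq\lambda'$ (the diagonal would yield an endomorphism rather than a genuine three-isogeny), the second equation simplifies to $4+4t(\sqrt{\lambda\lambda'}+1)+t^2(\sqrt{\lambda}+\sqrt{\lambda'})^2=0$, and dividing the first by the second forces $t^2=1/\sqrt{\lambda\lambda'}$. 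Eliminating $t$ between these two identities yields precisely $(\lambda+\lambda'+6\sqrt{\lambda\lambda'})^2=16(1+\sqrt{\lambda\lambda'})^2\sqrt{\lambda\lambda'}$.

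The main obstacle I foresee is the bookkeeping: verifying that the sign choices in the square roots, the choice of $S_3$-orbit representative, and the discarding of the degenerate branch $\lambda=\lambda'$ are all consistent, so that every $\lambda'$ satisfying the stated equation actually corresponds to a genuine degree-three isogeny rather than to a spurious solution of the discriminant conditions. Once this is settled, the converse direction --- that such an $h$ always lifts to an isogeny $\mathcal{E}'\to\mathcal{E}$ --- follows because the imposed ramification structure forces $h(x)(h(x)-1)(h(x)-\lambda)/\bigl(x(x-1)(x-\lambda')\bigr)$ to be a square in $\Bbb{C}(x)$, so $(x,y)\mapsto(h(x),g(x)y)$ defines a morphism of Legendre curves, hence an isogeny, placing the associated $\tau'$ in one of the four $\mathrm{SL}_2(\Bbb{Z})$-orbits identified at the outset.
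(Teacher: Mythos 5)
Your proposal follows essentially the same route as the paper: decompose the determinant-three matrices into four $\mathrm{SL}_2(\Bbb{Z})$-coset representatives, reduce via the commutative diagram and the $S_3$-normalization to the parametrized family (\ref{prototype}) with $h(x)=xP(x)/Q(x)$ and $P,Q$ of zero discriminant, impose the vanishing of the two bracketed discriminants, and eliminate $t$ under the assumption $\lambda\neq\lambda'$ using $t^2=1/\sqrt{\lambda\lambda'}$ to arrive at the stated relation. The steps, the normalization, and the final elimination are the ones the paper uses, so the argument is correct and not a genuinely different approach.
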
 

\begin{example}\label{ex3tau}
We finish with an application of the Theorem just proved. Suppose $\lambda=-1$ which corresponds to the square lattice, i.e. $\tau={\rm{i}}$, and the elliptic curve $y^2=x^3-x$. Employing Theorem \ref{3tau},
the objective is to determine some of numbers $j(3{\rm{i}})=j\left(\frac{{\rm{i}}}{3}\right)$ and $j\left(\frac{\pm 1+3{\rm{i}}}{2}\right)$. 
We have following estimates for them:
\begin{equation*}
\begin{split}
&j(3{\rm{i}})\approx{\rm{e}}^{6\pi}+744+196884{\rm{e}}^{-6\pi}\approx 153553679.396728\\
&j\left(\frac{\pm1+3{\rm{i}}}{2}\right)\approx -{\rm{e}}^{3\pi}+744-196884{\rm{e}}^{-3\pi}+21493760{\rm{e}}^{-6\pi}\approx  -11663.396275 
\end{split}
\end{equation*}
 $\sqrt{\lambda^{\prime}}$ should be a root of 
$$(x^2+6{\rm{i}}x-1)^2-16{\rm{i}}x(1+{\rm{i}}x)^2 =x^4+28{\rm{i}}x^3-6x^2-28{\rm{i}}x+1$$
Finding roots with help of some computer software, it turns out that our choices for $\sqrt{\lambda^{\prime}}$ are $-{\rm{i}}\left(2+\sqrt{3}\right)\left(\sqrt{2}\pm\sqrt[4]{3}\right)^2$ and  $-{\rm{i}}\left(2-\sqrt{3}\right)\left(\sqrt{2}\pm{\rm{i}}\sqrt[4]{3}\right)^2$ that give us: $$\lambda^{\prime}=-\left(2+\sqrt{3}\right)^2\left(\sqrt{2}\pm\sqrt[4]{3}\right)^4\quad\text{or}\quad\lambda^{\prime}=-\left(2-\sqrt{3}\right)^2\left(\sqrt{2}\pm{\rm{i}}\sqrt[4]{3}\right)^4.$$ 
In each of them the numbers that appeared are reciprocals of each other. 
So let us only concentrate on $\lambda^{\prime}=-\left(2+\sqrt{3}\right)^2\left(\sqrt{2}+\sqrt[4]{3}\right)^4$ and $\lambda^{\prime}=-\left(2-\sqrt{3}\right)^2\left(\sqrt{2}-{\rm{i}}\sqrt[4]{3}\right)^4$. They satisfy quadratic equations ${\lambda^{\prime}}^2+2\left(193+112\sqrt{3}\right)\lambda^{\prime}+1=0$ and ${\lambda^{\prime}}^2+2\left(193-112\sqrt{3}\right)\lambda^{\prime}+1=0$
respectively, the fact that extremely simplifies calculation of $j(\lambda)=256\frac{(\lambda^2-\lambda+1)^3}{(\lambda^2-\lambda)^2}$ at them (cf. footnote to Example \ref{D=3}):
\begin{equation*}
\begin{cases}
j\left(\lambda\right)\big |_{-\left(2+\sqrt{3}\right)^2\left(\sqrt{2}+\sqrt[4]{3}\right)^4}=64\left(387+224\sqrt{3}\right)^3\left(97-56\sqrt{3}\right)\approx  153553679.396790\\
j\left(\lambda\right)\big |_{-\left(2-\sqrt{3}\right)^2\left(\sqrt{2}+{\rm{i}}\sqrt[4]{3}\right)^4}=64\left(387-224\sqrt{3}\right)^3\left(97+56\sqrt{3}\right)\approx -11663.396728\\
\end{cases}
\end{equation*}
Comparing with approximations obtained from $q$-expansion before, we deduce that 
$$j(3{\rm{i}})=64\left(387+224\sqrt{3}\right)^3\left(97-56\sqrt{3}\right)\quad j\left(\frac{\pm1+3{\rm{i}}}{2}\right)= 64\left(387-224\sqrt{3}\right)^3\left(97+56\sqrt{3}\right)$$
\end{example}

\section{Table}
Results obtained in the examples of this paper are summarized in the  Table \ref{tab:1} where for a period $\tau$  from the upper half plane appeared in the first column, the Legendre representation and the $j$-invariant of the elliptic curve $\mathcal{E}\cong\frac{\Bbb{C}}{\Bbb{Z}+\Bbb{Z}\tau}$ can be read off from the second and third columns while in the fourth column the imaginary quadratic order ${\rm{End}}(\mathcal{E})$ is shown.
\begin{center}
\begin{table}[hbt!]
\caption{The special values of the $j$-function computed in this paper}
\begin{tabular}{|c|c|c|c|}
\hline
$\tau\in\Bbb{H}$ & $\lambda\in\Bbb{C}-\{0,1\}$ & $j$-invariant & $R$\\
\hline
$\sqrt{2}{\rm{i}}$ & $3\pm 2\sqrt{2}$ & $20^3$ & $\Bbb{Z}[\sqrt{-2}]$\\
\hline
$\sqrt{3}{\rm{i}}$ & $-7\pm 4\sqrt{3}$ & $16\times 15^3$ & $\Bbb{Z}[\sqrt{-3}]$\\
\hline
$\frac{1+\sqrt{7}{\rm{i}}}{2}$ & $\frac{1\pm 3\sqrt{7}{\rm{i}}}{2}$ & $-15^3$ & $\Bbb{Z}[\frac{1+\sqrt{-7}}{2}]$\\
\hline
$2{\rm{i}}$ & $17\pm 12\sqrt{2}$ & $66^3$ &$\Bbb{Z}[2\sqrt{-1}]$ \\
\hline
$2\sqrt{2}{\rm{i}}$ & $\frac{1}{4}\left(\sqrt{2\sqrt{2}-2}+\frac{\sqrt{2\sqrt{2}+2}}{2}+2\right)$ & $10^3\left(5+\sqrt{2}\right)^{3}\left(7+5\sqrt{2}\right)^2$ & $\Bbb{Z}[\sqrt{-8}]$\\ 
\hline
$2\sqrt{3}{\rm{i}}$ & $\frac{1}{4}\left(2\sqrt{2-\sqrt{3}}+\frac{\sqrt{2+\sqrt{3}}}{2}+2\right)$ &  $4\times 15^3\left(6-\sqrt{3}\right)^3\left(26+15\sqrt{3}\right)^2$ & $\Bbb{Z}[\sqrt{-12}]$\\
\hline
$\sqrt{7}{\rm{i}}$ & $\frac{1}{4}\left(\frac{\sqrt{62+ 6\sqrt{7}{\rm{i}}}}{8}+\frac{\sqrt{62- 6\sqrt{7}{\rm{i}}}}{8}+2\right)$ & $255^3$ & $\Bbb{Z}[\sqrt{-7}]$\\
\hline
$4{\rm{i}}$ &  $\frac{1}{4}\left(2\sqrt{3\sqrt{2}-4}+\frac{\sqrt{6\sqrt{2}+8}}{4}+2\right)$ & $\frac{1}{2}\left(99\sqrt{2}-12\right)^3\left(99\sqrt{2}+140\right)^2$ & $\Bbb{Z}[4\sqrt{-1}]$\\
\hline
$\frac{-1+2\sqrt{2}{\rm{i}}}{3}$ & $\frac{1}{4}\left({\rm{i}}\sqrt{2\sqrt{2}+2}-{\rm{i}}\frac{\sqrt{2\sqrt{2}-2}}{2}+2\right)$  & $10^3\left(5-\sqrt{2}\right)^{3}\left(7-5\sqrt{2}\right)^2$ & $\Bbb{Z}[2\sqrt{-2}]$\\
\hline
$3{\rm{i}}$ &  $-\left(2+\sqrt{3}\right)^2\left(\sqrt{2}+\sqrt[4]{3}\right)^4$ & $64\left(387+224\sqrt{3}\right)^3\left(97-56\sqrt{3}\right)$ & $\Bbb{Z}[3\sqrt{-1}]$\\
\hline
$\frac{1+3{\rm{i}}}{2}$ & $-\left(2-\sqrt{3}\right)^2\left(\sqrt{2}-{\rm{i}}\sqrt[4]{3}\right)^4$ & $64\left(387-224\sqrt{3}\right)^3\left(97+56\sqrt{3}\right)$ & $\Bbb{Z}[3\sqrt{-1}]$\\
\hline
$\frac{1+\sqrt{15}{\rm{i}}}{4}$ & $\left(\frac{\sqrt{5}-1}{8}(1\pm\left(2\sqrt{3}+\sqrt{15}){\rm{i}}\right)\right)^4$ & $\frac{(7-3\sqrt{5})^2 (-1+3\sqrt{5})^3 (15+3\sqrt{5})^3}{256}$ & $\Bbb{Z}[\frac{1+\sqrt{-15}}{2}]$\\
\hline
$\frac{1+\sqrt{15}{\rm{i}}}{2}$ & $\left(\frac{\sqrt{5}+1}{8}(-1\pm\left(2\sqrt{3}-\sqrt{15}\right){\rm{i}})\right)^4$ &  $\frac{(7+3\sqrt{5})^2 (-1-3\sqrt{5})^3 (15-3\sqrt{5})^3}{256}$ & $\Bbb{Z}[\frac{1+\sqrt{-15}}{2}]$\\
\hline
\end{tabular}
\label{tab:1}
\end{table}
\end{center}

\vskip 0.4 cm
{\bf{Acknowledgement.}} The author would like to thank Professors H. Fanai, A. Kamalinejad and M. Shahshahani for their consistent encouragement, and A. K. and M. S. for making \cite{cwd} available to him, and very helpful discussions on various topics. I am also grateful to Professors A. Rajaei and R. Takloo-Bighash for the time they spent reading this manuscript and their valuable suggestions.

\vskip .5 cm

\end{document}